\newtheorem*{cla}{Claim}
\newtheorem*{cla1}{Claim 1}
\newtheorem*{cla2}{Claim 2}
\newtheorem*{cla3}{Claim 3}
\newtheorem*{cla4}{Claim 4}
\newenvironment{claimproof}{\paragraph{\em Proof of claim.}}{\hfill\scalebox{0.7}{$\blacksquare$}}
\newtheorem{theorem}{Theorem}[section]
\newtheorem{remark}[theorem]{Remark}
\newtheorem{example}[theorem]{Example}
\theoremstyle{definition}
\theoremstyle{lemma}
\newtheorem{lemma}[theorem]{Lemma}
\theoremstyle{corollary}
\newtheorem{corollary}[theorem]{Corollary}
\theoremstyle{proposition}
\newtheorem{proposition}[theorem]{Proposition}
\theoremstyle{notation}
\theoremstyle{problem}
\theoremstyle{conjecture}
\numberwithin{equation}{section}
\newcommand{\alg}[1]{\mathbf{#1}}
\newcommand{\var}[1]{\mathcal{#1}}
\newcommand{\pol}{\operatorname{Pol}}
\newcommand{\clo}{\operatorname{Clo}}
\newcommand{\id}{\operatorname{id}}
\newcommand{\Id}{\operatorname{Id}}
\newcommand{\con}{\operatorname{Con}}
\newcommand\pow[1]{\mathcal{P}(#1)}
\DeclareMathAlphabet\mathbfcal{OMS}{cmsy}{b}{n}
\newcommand{\LL}{\mathbfcal{L}}
\newcommand{\ultra}{\mathcal{U}}
\newcommand{\vltra}{\mathcal{\bar U}}
\newcommand{\mA}{\mathbb{A}}
\newcommand{\mB}{\mathbb{B}}
\newcommand{\mC}{\mathbb{C}}
\newcommand{\mD}{\mathbb{D}}
\newcommand{\mE}{\mathbb{E}}
\newcommand{\mF}{\mathbb{F}}
\newcommand{\mG}{\mathbb{G}}
\newcommand{\mH}{\mathbb{H}}
\newcommand{\mK}{\mathbb{K}}
\newcommand{\mL}{\mathbb{L}}
\newcommand{\mP}{\mathbb{P}}
\newcommand{\mQ}{\mathbb{Q}}
\newcommand{\mU}{\mathbb{U}}
\begin{document}

\title{Taylor is prime}

\address{Bolyai Institute, Univ. of Szeged, Szeged, Aradi V\'{e}rtan\'{u}k tere 1, HUNGARY 6720}

\author{Bertalan Bodor} 
\email{bodor@server.math.u-szeged.hu} 

\author{Gerg\H o Gyenizse} 
\email{gergogyenizse@gmail.com}    
           
\author{Mikl\'os Mar\'oti}
\email{mmaroti@math.u-szeged.hu} 
            
\author{L\'aszl\'o Z\'adori}
\email{zadori@math.u-szeged.hu}

\thanks{\thanks{The research of authors was supported by the NKFIH grant K138892 and Project no TKP2021-NVA-09 where the latter has been financed by the Ministry of Culture and Innovation of Hungary from the National Research, Development and Innovation Fund.}}

%\date{Received: date / Accepted: date}
% The correct dates will be entered by the editorimproved

\begin{abstract}
We study the Taylor varieties and obtain new characterizations of them via compatible reflexive digraphs. Based on our findings, we prove that in the lattice of interpretability types of varieties, the filter of the types of all Taylor varieties is prime. 
\end{abstract}

\maketitle

\section{Introduction}

We require some basic concepts to introduce the topic of our investigations in this paper. Let $A$ be a set. An $n$-ary operation $f$ on $A$ is {\em idempotent} if $f(a,\dots,a)=a$ for all $a\in A$.
 A {\em clone on} $A$ is a set  of operations on $A$ that contains all projection operations and is closed under composition.
Let $\alg A$ be an algebra. The {\em clone of} $\alg A$ denoted by $\clo (\alg A)$ is the clone of term operations of $\alg A$. The {\em clone of a variety} $\var V$ is the clone of the free algebra generated by countably many free generators. An {\em algebra (a variety)  is idempotent} if all operations in its clone are idempotent. 

A {\em clone homomorphism} from a clone $C$ to a clone $D$ is a map from $C$ to $D$ that maps each projection of $C$ to the corresponding projection of $D$ and commutes with composition. A {\em variety $\var V$ interprets in a variety $\var W$} if there is a clone homomorphism from the clone of $\var V$ to the clone of $\var W$. 

As easily seen, interpretability is a quasiorder on the class of varieties. The blocks of this quasiorder are called  {\em interpretability types}. In \cite{GT} Garcia and Taylor introduced the {\em lattice $\LL$ of interpretability types of varieties} that is obtained by taking the quotient of the class of varieties quasiordered by  interpretability and the related equivalence. The join in  $\LL$ is described as follows. Let $\mathcal{V}_1$  and $\mathcal{V}_2$ be two varieties of disjoint signatures. Let $\mathcal{V}_1$  and $\mathcal{V}_2$  be defined by the sets $\Sigma_1$ and $\Sigma_2$ of identities, respectively. {\em Their join} $\mathcal{V}_1\vee\mathcal{V}_2$ is the variety defined by $\Sigma_1\cup\Sigma_2$. The so defined join is compatible with the interpretability relation of varieties, and naturally yields the definition of the join operation in $\LL$.

A {\em strong Maltsev condition} is a finite set of identities in the language of a variety. A {\em Maltsev condition} is a sequence $M_n,\ n\geq 1,$ of strong  Maltsev conditions where the variety defined by $M_{n+1}$ interprets in the variety defined by $M_n$ for each $n\geq 1$. We say that a {\em variety $\var V$ admits a Maltsev condition  $M_n,\ n\geq 1,$ (or the terms that occur in a Maltsev condition  $M_n,\ n\geq 1$)}  if there exists an $n$ such that the variety defined by $M_n$ interprets in $\var V$.  By a {\em filter}, we mean an upwardly closed sublattice of a lattice. Clearly, the types of the varieties that admit a particular Maltsev condition form a filter in $\LL$.  Such filters of $\LL$ are called {\em Maltsev filters}. A filter $F$ of a lattice is called {\em prime} if $F$ is a proper subset of the lattice and for any two elements $a,b\not\in F$,  we have $a\vee b\not\in F$.
To gain information on the structural properties of $\LL$, it is useful to determine which of the well-known classical Maltsev filters are prime filters in $\LL$. In case a Maltsev filter is (not) prime, we just say sometimes that the related Maltsev condition is (not) prime. 

In \cite{GT}, primeness of some well-known Maltsev filters of $\LL$ was decided. For example, in \cite{GT}, it was proved that  the filter containing the largest element of $\LL$, given by the Maltsev condition $\{x=y\}$, the filter determined by the Maltsev condition $\{s(x,y)=s(y,x)\}$, and the filter determined by congruence regularity are all prime in $\LL$. However, the filter determined by congruence distributivity is not prime in $\LL$. The primeness of some Maltsev filters, such as the filter of the types of congruence permutable varieties and the filter of the types of congruence modular varieties, were stated as open problems in \cite{GT}. Not long ago, the last  three authors of this paper proved that congruence permutability is indeed a prime Maltsev condition, see \cite{GMZ2}. The problem on congruence modular varieties is still open. 

By extending the results of Hobby and McKenzie in \cite{HM}, Kearnes and Kiss elaborated a hierarchy of varieties based on certain Maltsev conditions in \cite{KK}. It is natural to look at the problem of primeness for the Maltsev filters in $\LL$ that were characterized in \cite{KK}. The largest Maltsev filter described in \cite{KK}, introduced by Taylor in \cite{T}, and studied in \cite{HM} is the class of the interpretability types of Taylor varieties ({\em Taylor interpretability types} for short). In the present paper, we investigate the class of Taylor varieties, and prove that the filter of Taylor interpretability types is  prime in $\LL$. 

In Figure \ref{fig:hierarch}, we depicted the Maltsev filters of $\LL$ that were used to establish a certain hierarchy of varieties in \cite{KK}. A node is painted black if it symbolizes a prime Maltsev filter, is left empty if it symbolizes a non-prime filter, and is painted gray if its status is unknown at present. The filters in the figure are depicted accordingly to reverse containment. This may be unusual but we want to keep the direction coming from the interpretability ordering. It is well known that the filters in the figure are pairwise different. 

\begin{figure}[H] 
\centering
\includegraphics[scale=1]{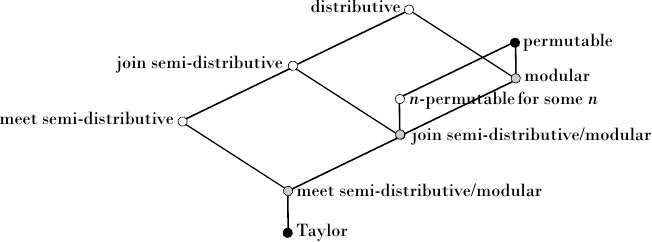}
\caption{Maltsev filters that establish a hierarchy of varieties.} 
\label{fig:hierarch}
\end{figure}

In a short while below, we define Taylor varieties. Then the meaning of the filter of Taylor interpretability types that is the main object of our investigations in the present paper becomes clear. For the definitions of the other Maltsev filters in Figure \ref{fig:hierarch}, the reader should consult  \cite{KK}.  Note that monograph \cite{KK} contains a wide variety of characterizations of the filter of Taylor interpretability types and most of the other filters in the figure. 

We remark that the non-primeness of $n$-permutability for some $n$ was proved by the last three authors of the present paper in \cite{GMZ1}. To verify the status of distributivity, join and meet semi-distributivity in Figure \ref{fig:hierarch}, we render a  proposition. 

We need the following definitions. A lattice is {\em  meet semi-distributive} if it satisfies the implication $x\wedge y=x\wedge z\Rightarrow x\wedge y=x\wedge (y\vee z)$. The {\em join semi-distributivity} of a lattice is defined dually.
A variety $\var V$ is {\em congruence meet semi-distributive (congruence join semi-distributive, congruence distributive)} if the congruence lattice of any algebra in $\var V$ has the same property. A ternary term $t$ in the language of a variety $\var V$ is called a {\em a majority term for $\var V$} if $t$ satisfies the identities $t(y,x,x)=t(x,y,x)=t(x,x,y)=x$ in $\var V$.

\begin{proposition}
    The following Maltsev  conditions  are not prime:
    \begin{enumerate}
        \item congruence meet semi-distributivity,
        \item congruence join semi-distributivity,
        \item congruence distributivity,
        \item admittance of a majority term. 
    \end{enumerate}
\end{proposition}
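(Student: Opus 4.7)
The plan is to prove non-primeness of all four Maltsev filters with a single pair of witness varieties $\var V_1$, $\var V_2$ in disjoint signatures. Since admittance of a majority term implies congruence distributivity (by J\'onsson), and congruence distributivity implies both meet and join semi-distributivity, it suffices to find $\var V_1$ and $\var V_2$ such that neither is congruence meet or join semi-distributive, while the join $\var V_1\vee\var V_2$ admits a majority term.

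I would take $\var V_1$ to be the variety of $\mathbb{Z}_2$-vector spaces (with binary $+$ satisfying $x+x=0$ and the usual abelian-group identities) and $\var V_2$ to be the variety of Steiner quasigroups, i.e., algebras with a single binary operation $\wedge$ satisfying $x\wedge x=x$, $x\wedge y=y\wedge x$, and $x\wedge(x\wedge y)=y$. For each of them, one exhibits a single algebra whose congruence lattice fails both semi-distributivity conditions. For $\var V_1$ this is $\mathbb{Z}_2\times\mathbb{Z}_2$, whose congruence lattice equals its subgroup lattice $M_3$. For $\var V_2$, take the square of the three-element Steiner quasigroup $(\mathbb{Z}_3,\,x\wedge y=-x-y)$: one checks that the only binary term operations of this algebra are $x$, $y$ and $-x-y$, whence a congruence is exactly an equivalence relation whose class of $(0,0)$ is a subgroup of $\mathbb{Z}_3^2$, giving congruence lattice $M_4$. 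Both $M_3$ and $M_4$ fail meet and join semi-distributivity, so $\var V_1$ and $\var V_2$ lie outside all four filters.

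In the join $\var V_1\vee\var V_2$, whose algebras carry a $\mathbb{Z}_2$-module structure and a Steiner-quasigroup structure independently on the same universe, I propose the term
\[
 t(x,y,z) \;=\; (x\wedge y) + (x\wedge z) + (y\wedge z).
\]
Using $a+a=0$ together with the commutativity of $\wedge$, a direct substitution yields $t(y,x,x)=0+x=x$, $t(x,y,x)=(x\wedge y)+x+(y\wedge x)=0+x=x$, and $t(x,x,y)=x+0=x$. Hence $t$ is a majority term for $\var V_1\vee\var V_2$, placing the join in the majority filter, and therefore in all four filters in question.

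The main obstacle is the congruence-lattice calculation for the square of the three-element Steiner quasigroup: everything else is routine substitution. Once the claim about binary term operations is established, the subgroup lattice of $\mathbb{Z}_3^2$ being $M_4$ delivers the remaining conclusion.
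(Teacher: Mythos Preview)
Your proof is correct and follows essentially the same strategy as the paper's: exhibit two varieties whose squares over $\mathbb{Z}_2$ and $\mathbb{Z}_3$ have congruence lattices $M_3$ and $M_4$ (hence fail both semi-distributivity laws), and then build a majority term in the join. The paper chooses $\var V_1$ to be the variety axiomatized by a minority term $m$ (witnessed by $(\mathbb{Z}_2;\,x+y+z)$) and $\var V_2$ to be the variety of a commutative idempotent binary operation $s$ (witnessed by $(\mathbb{Z}_3;\,2x+2y)$), with majority term $m(s(x,y),s(x,z),s(y,z))$; your choices of elementary abelian $2$-groups and Steiner quasigroups are term-equivalent on the same witness algebras, and your majority term $(x\wedge y)+(x\wedge z)+(y\wedge z)$ works by the same mechanism. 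One small note: your verification of the majority identities uses only idempotence and commutativity of $\wedge$ together with $a+a=0$, so the Steiner law $x\wedge(x\wedge y)=y$ is not actually needed for that step; also, the cleanest justification for your $M_4$ computation is that $(\mathbb{Z}_3;\,2x+2y)$ is polynomially equivalent to the group $(\mathbb{Z}_3;+)$, which is exactly how the paper argues it.
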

\begin{proof} As well known, see for example \cite{KK}, any variety that satisfies a Maltsev condition on the above list also satisfies the ones prior to it. So to prove the proposition, it suffices to present two varieties that are not meet semi-distributive such that their join admits a majority term. 

Let $\var V_1$ be the variety  defined by the identities $$m(x,y,y)=m(y,x,y)=m(y,y,x)=x$$ for a single ternary operation symbol $m$, and $\var V_2$ the variety defined by the identities $$s(x,x)=x\text{ and }s(x,y)=s(y,x)$$ for a single  binary operation symbol $s$. 

We take the algebras $$\alg A_1 =(Z_2;\ x+y+z)\in \var V_1 \text{ and }\alg A_2= (Z_3;\ 2x+2y)\in \var V_2 $$ and determine the congruence lattices of the algebras $\alg A_i^2\in\var V_i$, $1\leq i\leq 2$. Notice that these squares have the same polynomials as the groups $\alg Z_{i+1}^2=(Z_{i+1};\ +)^2$, $1\leq i\leq 2$, respectively. So $\con(\alg A_i^2)\cong \con(\alg Z_{i+1}^2) \cong \alg M_{i+2}$, $1\leq i\leq 2$, where $\alg M_j$ denotes the $(j+2)$-element lattice with $j$ atoms. Since these congruence lattices are not  meet semi-distributive, the varieties $\var V_1$ and $\var V_2$ are not meet semi-distributive either, but their join  has a majority term  of the form $m(s(x,y),s(x,z),s(y,z))$. 
\end{proof}

We remark that the question of primeness of an idempotent Maltsev condition makes sense restricted to the sublattice $\LL_{\Id}$ of interpretability types of idempotent varieties in $\LL$. While the primeness of an idempotent Maltsev condition in $\LL$ implies the primeness of the same idempotent Maltsev condition in $\LL_{\Id}$, the converse does not hold in general. In this respect, we mention that in the proof of the above proposition, only idempotent varieties were used, so the proposition holds if we consider  interpretability in $\LL_{\Id}$, that is, majority, distributivity, join semi-distributivity, and meet semi-distributivity are not prime Maltsev conditions in $\LL_{\Id}$ as well. The primeness of  the filter of Taylor interpretability types in $\LL_{\Id}$ follows from Taylor's work in \cite{T}. We shall give an argument below.  The primeness of join semi-distributive over modular in $\LL_{\Id}$  immediately follows from Lemma 9.5 in \cite{HM}. It was proved in \cite{VW} that $n$-permutability for some $n$ is prime in $\LL_{\Id}$.  The primeness of  modularity relative to $\LL_{\Id}$ was obtained in \cite{OP}. The primeness of  permutability in $\LL_{\Id}$ was first proved in \cite{KT}. We are not aware  of any result related to the primeness of meet-semidistributive over modular in $\LL_{\Id}$ or in $\LL$. 

Let $\mathcal{S}\mathcal{E}\mathcal{T}$ denote the variety of algebras with no basic operations. We note that $\mathcal{S}\mathcal{E}\mathcal{T}$ interprets in every variety, and hence its interpretability type is the smallest element of $\LL$. A {\em Maltsev condition is trivial} if the variety of $\mathcal{S}\mathcal{E}\mathcal{T}$ admits it. A {\em Taylor variety} is a variety which admits a non-trivial idempotent (strong) Maltsev condition. Taylor varieties play a crucial role in the theory of  general algebras \cite{KK}, in tame congruence theory \cite{HM}, and in the theory of constraint satisfaction problems \cite{LT}. In all of these areas, the Taylor varieties are considered the well-behaved varieties from many points of view. Meanwhile, the non-Taylor varieties are looked on as the black sheep of the family with wild properties to be expected for them.

 An identity is {\em linear} if it has at most one occurrence of a function symbol on each side of the identity. A {Taylor term} is an $n$-ary idempotent term $t$ for some $n$ such that for each $1\leq i\leq n$, $t$ satisfies a linear identity  $$t(\dots,x,\dots)=t(\dots,y,\dots)$$ in two different variables $x$ and $y$, where the variables $x$ and $y$ displayed are in the $i$-th positions respectively on both sides, and the occurrences of the variables not displayed are  arbitrarily  set $x$ or $y$. 

The {\em full idempotent reduct of a variety} $\var V$ is a variety whose signature is the set of idempotent terms of $\var V$, and whose identities are those satisfied by the idempotent terms for $\var V$. A {\em group $\alg G$ is compatible in a variety $\var V$} if there is an algebra $\alg A$ in $\var V$ such that the underlying sets of $\alg A$ and $\alg G$ are the same and the operations of $\alg A$ commute with the group operation of $\alg G$. The following characterization of Taylor vaieties is due to Taylor, cf. Theorems 5.1, 5.2, and 5.3 in \cite{T}.

\begin{theorem}\label{taylor} 
Let $\var V$ be a variety, and $\var V_{\Id}$ the full idempotent reduct of $\var V$. Then the following are equivalent.
\begin{enumerate}
\item $\var V$ is a Taylor variety.
\item Every compatible group is commutative in $\var V_{\Id}$. 
\item $\var V_{\Id}$ does not interpret in $\mathcal{S}\mathcal{E}\mathcal{T}.$
\item $\var V$ admits a Taylor term.
\end{enumerate}
\end{theorem}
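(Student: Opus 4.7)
The plan is to prove $(1) \Leftrightarrow (4)$ as a combinatorial reformulation, $(4) \Leftrightarrow (3)$ by clone-theoretic arguments, and then $(4) \Rightarrow (2) \Rightarrow (3)$ to complete all four equivalences. For $(1) \Leftrightarrow (4)$: the Taylor identities attached to an $n$-ary Taylor term form a finite idempotent Maltsev condition that is non-trivial by $(4) \Rightarrow (3)$ below, giving $(4) \Rightarrow (1)$; conversely, starting from a non-trivial idempotent strong Maltsev condition satisfied in $\var V$ by terms $t_1, \ldots, t_k$, I would assemble by iterated substitution a single idempotent term $t$ whose coordinates each inherit a linear identity from the original condition, and the non-triviality of the condition ensures that not every coordinate can be assigned a trivial such identity, so $t$ may be chosen Taylor.

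For $(4) \Rightarrow (3)$: a clone homomorphism $\phi:\clo(\var V_{\Id}) \to \clo(\mathcal{S}\mathcal{E}\mathcal{T})$ would have to send an $n$-ary Taylor term $t$ to some projection $\pi_i$, and applying $\phi$ to the $i$-th Taylor identity (which displays distinct variables $x, y$ at coordinate $i$) would collapse it to $x = y$, a contradiction. For $(3) \Rightarrow (4)$ I argue contrapositively: if $\var V$ admits no Taylor term, then for each $n$-ary idempotent term $t$ of $\var V$ there is a coordinate $\sigma(t) \in \{1,\ldots,n\}$ at which $t$ satisfies no linear identity of Taylor type, and I claim the assignment $t \mapsto \pi_{\sigma(t)}$ is a clone homomorphism into $\clo(\mathcal{S}\mathcal{E}\mathcal{T})$, negating $(3)$.

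For $(4) \Rightarrow (2)$: if $t$ is an $n$-ary Taylor term and $\alg G$ is a compatible group in $\var V_{\Id}$ via an algebra $\alg A$, then $t$ acts on $\alg A$ as a group homomorphism $\alg G^n \to \alg G$, necessarily of the form $\phi_1(x_1) \cdots \phi_n(x_n)$ with endomorphisms $\phi_i$ whose images pairwise commute in $\alg G$ and $\phi_1(x)\cdots\phi_n(x) = x$; substituting $x = e$ or $y = e$ into each Taylor identity of $t$ produces additional relations among the $\phi_i$'s that, combined with the commuting-image and idempotence constraints, force $\alg G$ to be abelian. For $(2) \Rightarrow (3)$ I argue contrapositively: given a clone homomorphism $\phi$ witnessing $\neg(3)$, any non-abelian group $\alg G$ on a set $A$ gives rise to an algebra on $A$ whose operations are the projections $\phi(\cdot)$; this algebra lies in $\var V_{\Id}$ (since $\phi$ is a clone homomorphism) and has $\alg G$ compatible (projections commute with any group operation), witnessing $\neg(2)$.

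The main technical obstacle I anticipate is in $(3) \Rightarrow (4)$: verifying that the coordinate-assignment $\sigma$ really defines a clone homomorphism amounts to proving $\sigma(t(r_1,\ldots,r_k)) = \sigma(r_{\sigma(t)})$ (under a sufficiently clever choice of $\sigma$) for any composition. Compatibility of $\sigma$ with composition is precisely where the global assumption ``no Taylor term in $\var V$'' is used, via a careful analysis of how Taylor-style linear identities propagate through substitution.
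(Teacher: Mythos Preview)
The paper does not give its own proof of this theorem; it simply attributes the result to Taylor (Theorems~5.1--5.3 of \cite{T}) and states it without argument, so there is no in-paper proof to compare against.

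Evaluating your proposal on its merits: the implications $(4)\Rightarrow(3)$, $(2)\Rightarrow(3)$, and $(4)\Rightarrow(1)$ are correct as sketched. Your outline of $(4)\Rightarrow(2)$ is also essentially right; the missing detail is that the $i$-th Taylor identity, after substituting $y=e$, expresses $\phi_i(x)$ as a word in the $\phi_j(x)$ for $j\neq i$, and since $\operatorname{Im}(\phi_i)$ commutes elementwise with each $\operatorname{Im}(\phi_j)$ ($j\neq i$), this forces $\phi_i(x)$ to commute with every $\phi_i(y)$, so each $\operatorname{Im}(\phi_i)$ is abelian and hence $G=\langle \bigcup_i\operatorname{Im}(\phi_i)\rangle$ is abelian.

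The real gap is in $\neg(4)\Rightarrow\neg(3)$, exactly where you flag it. Merely knowing that each idempotent term $t$ has \emph{some} coordinate with no Taylor-type identity does not give a clone homomorphism: a term may have several such coordinates, and an arbitrary selection need not respect composition. The missing ingredient is Taylor's ``star composition'': given a non-trivial linear idempotent strong Maltsev condition in operations $f_1,\dots,f_k$ (and $(3)\Rightarrow(1)$ supplies one, by a direct compactness argument), form the single term $t$ obtained by nesting all the $f_i$ into one another, so that the variables of $t$ are indexed by tuples $(c_1,\dots,c_k)$ of coordinate positions. For each such tuple, non-triviality says the assignment $f_i\mapsto\pi_{c_i}$ violates some identity of the condition; lifting that violated identity to $t$ produces a Taylor-type identity at coordinate $(c_1,\dots,c_k)$. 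Thus every coordinate of $t$ is covered and $t$ is a Taylor term. Your sketches of $(1)\Rightarrow(4)$ and $(3)\Rightarrow(4)$ both gesture at this but do not supply the construction or the reason non-triviality covers \emph{every} coordinate; without it the proof does not close.
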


In Theorem 1.1 of \cite{O}, Ol\v s\'ak gave a characterization of Taylor varieties by some non-trivial idempotent strong Maltsev condition, and hence their interpretability types form a principal filter in $\LL$. An {\em Ol\v s\'ak term} is a 6-ary idempotent term $t$ that satisfies the following identities $$t(x,y,y,y,x,x)=t(y,x,y,x,y,x)=t(y,y,x,x,x,y).$$

\begin{theorem} 
A variety $\var V$ is a Taylor variety if and only if $\var V$  admits an Ol\v s\'ak term.
\end{theorem}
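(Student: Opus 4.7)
The easier direction is that every Ol\v s\'ak term is automatically a Taylor term, whereupon Theorem~\ref{taylor}(4)$\Rightarrow$(1) gives the conclusion. Writing the three argument tuples as $p_1=(x,y,y,y,x,x)$, $p_2=(y,x,y,x,y,x)$, $p_3=(y,y,x,x,x,y)$, a direct coordinate-by-coordinate check shows that for every $i\in\{1,\dots,6\}$ at least one of the three pairs $(p_1,p_2),(p_1,p_3),(p_2,p_3)$ differs at $i$ by an $x$-versus-$y$ swap, so the corresponding Ol\v s\'ak equality furnishes a linear identity witnessing the Taylor condition at coordinate $i$. Combined with idempotence of $t$, this makes $t$ a Taylor term.

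The converse is the substantive direction. Assume $\var V$ is a Taylor variety, so by Theorem~\ref{taylor}(3) the full idempotent reduct $\var V_{\Id}$ does not interpret in $\mathcal{SET}$. The plan is to work in the free $\var V_{\Id}$-algebra $\alg F$ on two generators $x,y$ and to consider
$$R=\bigl\{(f(p_1),f(p_2),f(p_3)):f\text{ a 6-ary idempotent term of }\var V\bigr\}\subseteq\alg F^3.$$
Closure under coordinatewise application of the basic operations of $\var V_{\Id}$ makes $R$ a subalgebra of $\alg F^3$, and producing an Ol\v s\'ak term for $\var V$ is equivalent to producing an element of $R$ on the diagonal $\Delta=\{(a,a,a):a\in\alg F\}$.

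The main obstacle, and technical heart of the proof, is to force $R\cap\Delta\neq\emptyset$. I would proceed by contradiction: if $R$ missed $\Delta$ entirely, then, together with its compatibility with every operation of $\var V_{\Id}$, the subalgebra $R\subseteq\alg F^3$ would carry enough rigidity to build a clone homomorphism $\clo(\var V_{\Id})\to\proj$, contradicting Theorem~\ref{taylor}(3). The specific shape of $p_1,p_2,p_3$ is engineered to make this reduction possible: their six columns exhaust the non-constant elements of $\{x,y\}^3$, providing exactly the combinatorial richness needed to translate a ``coordinate-selection rule'' on $R$ into a consistent choice of projection for each term. Once an element $(a,a,a)\in R$ is obtained, the witnessing 6-ary idempotent term is, by construction, an Ol\v s\'ak term for $\var V$.
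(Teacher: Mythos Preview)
The paper does not prove this theorem; it is stated with attribution to Ol\v s\'ak \cite{O} and no argument is given. So there is no in-paper proof to compare against, and your proposal must stand on its own.

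Your easy direction is fine: the coordinate check is correct, and each of the three Ol\v s\'ak equalities supplies, at every position $i\in\{1,\dots,6\}$, a linear identity with $x$ on one side and $y$ on the other, so an Ol\v s\'ak term is a Taylor term and Theorem~\ref{taylor} applies.

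The hard direction has a genuine gap. You correctly set up the subalgebra $R\subseteq\alg F^3$ generated by the six non-constant tuples of $\{x,y\}^3$, and you correctly observe that exhibiting an Ol\v s\'ak term is the same as showing $R\cap\Delta\neq\emptyset$. But the sentence ``if $R$ missed $\Delta$ entirely, then \dots\ $R$ would carry enough rigidity to build a clone homomorphism $\clo(\var V_{\Id})\to\proj$'' is an assertion, not an argument, and it is exactly the content of the theorem. The observation that the six columns exhaust the non-constant elements of $\{x,y\}^3$ is pertinent but does not by itself produce a ``coordinate-selection rule'': that generating set is invariant under the full $S_3$-action on $\alg F^3$ permuting coordinates (and under the $x\leftrightarrow y$ swap), so $R$ inherits this symmetry, and any would-be rule that singles out a coordinate from a tuple of $R$ must break it. You give no mechanism for doing so. Nor can one simply restrict to $\{x,y\}$ and invoke the not-all-equal relation on a two-element algebra, since $\var V_{\Id}$ need not have a two-element model and elements of $R$ need not have coordinates in $\{x,y\}$.

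Ol\v s\'ak's actual proof does not extract a clone homomorphism from the hypothesis $R\cap\Delta=\emptyset$; it goes the other way, starting from an arbitrary Taylor term and producing the $6$-ary term by an explicit (and somewhat delicate) iterated composition. Your sketch identifies the right target but omits the substantive construction.
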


Further equivalent conditions characterizing Taylor varieties can be found in the monograph \cite{KK} of Kearnes and Kiss. Notice that by Theorem \ref{taylor}, if $\var V_1$ and $\var V_2$ are idempotent non-Taylor varieties, then they both interpret in $\mathcal{S}\mathcal{E}\mathcal{T}$. So their join also interprets in  $\mathcal{S}\mathcal{E}\mathcal{T}$. Hence, the filter of  Taylor interpretability types is prime in $\LL_{\Id}$. 

The main result of this paper states that we can drop idempotency from this statement, that is, 
the filter of Taylor interpretability types is prime in $\LL$. For the proof, we shall derive a novel characterization  of Taylor varieties in Section 4.  We present some  preliminary results on disjoint unions of powers of the reflexive directed triangle digraph in Sections 2 and 3. We note at this point that the second part of Section 2 following Corollary \ref{cloneofc}, and the  entire Section 3 cover materials that are not used in the proof of the main theorem. These parts of the paper contain motivating examples and results that led to our characterization of Taylor varieties in Section 4. So the reader may choose to skip proofs in these parts of the paper when first reading it without losing essentials to the content of Section~4.

\section{Some basic facts on powers of $\mC$}

Throughout this paper, we use blackboard bold capital letters to denote relational structures, in particular digraphs, and use bold capital letters to denote algebras.  Usually, we use the same capital italics to denote the base sets of these structures.
We denote by $\mC$ the 3-cycle graph together with all loops. We use the following notational convention: the vertices of $\mC$ are labeled $0,1,2$ and the edges of $\mC$ are $(0,1),(1,2),(2,0),(0,0),(1,1),(2,2)$, see Figure \ref{fig:triangle}.
	
\begin{figure}[H] 
\centering
\includegraphics[scale=0.7]{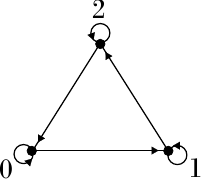}
\caption{The digraph $\mC$.} 
\label{fig:triangle}
\end{figure}
	
	Let $\mG$ and $\mH$  be digraphs. An edge preserving map $\varphi\colon G\to H$ is called a {\em homomorphism from $\mG$ to $\mH$.}  A homomorphism  $\varphi\colon \mG \to \mH$ is a {\em retraction}  if there exists a homomorphism $\varepsilon\colon \mH \to \mG$ where $\varphi\circ\varepsilon=\id_H$. A digraph $\mH$ is a {\em retract} of $\mG$ if there is a retraction from $\mG$ onto $\mH$. We say that $\mH$ is a {\em subdigraph} of $\mG$ if every vertex of $\mH$ is a vertex of $\mG$ and every edge of $\mH$ is an edge of $\mG$. We use $\mH\subseteq\mG$ to denote that
$\mH$ is a subdigraph of $\mG$. We call $\mH$ {\em a spanned subdigraph of }$\mG$, if $\mH\subseteq\mG$ and every edge of $\mG$ whose end vertices are in $H$ is also an edge of $\mH$. A {\em digraph is connected} if the reflexive, symmetric, transitive closure of its edge relation is the full relation. The {\em (connected) components} of a digraph are the maximal connected spanned subdigraphs.
	
Let $\mH=(H,\rightarrow)$ be a digraph and $I$ an arbitrary set. Then we write $\mH^I$ for the digraph whose vertices are the elements of $H^I$, i.e., all functions from $I$ to $H$, and whose set of edges is $\{(u,v)\in (H^I)^2: \forall i\in I,\ u(i)\rightarrow v(i)\}$. One can think of $I$ as a set of coordinates, and then $u(i)$ denotes the $i$-th entry of some vertex $u\in H^I$. For $J\subseteq I$ we write $\pi_J$ for the projection map from $H^I$ to $H^J$, i.e., $\pi_J(a)=a|_J$. This map is clearly a surjective homomorphism from $\mH^I$ to $\mH^J$. A homomorphism from $\mH^I$ to $\mH$ is called an $I$-ary {\em polymorphism} of $\mH$.

We conceive of $C^I$ as the Abelian group $\alg Z_3^I$. So addition and subtraction for the elements of $C^I$ are always understood as in $\alg Z_3^I$. For some subset $J\subseteq I$ we define $\chi_J\in C^I$ by $$\chi_J(j)=1 \text{ if } j\in J, \text{ and } \chi_J(j)=0 \text{ otherwise.}$$ Clearly, every $a\in C^I$ can uniquely be written as $a=\chi_{A}-\chi_{B}$ for some disjoint subsets $A$ and $B$ of $I$.    
 
We call a digraph $\mH$ {\em reflexive} if for any $u\in H$, $u\to u.$  A digraph $\mH$ is {\em antisymmetric} if for any $u,v\in H$, $u\rightarrow v\rightarrow u$ implies $u=v$.

\begin{lemma}\label{image_of_powers}
Let $\mH$ be an antisymmetric digraph such that any non-loop edge of $\mH$ is contained in at most one copy of $\mC$ in $\mH$.
Let $I$ be a finite set, and $f\colon \mC^I\rightarrow \mH$  a homomorphism. Then there exist $J\subseteq I$ and an isomorphism $\iota\colon \mC^J\rightarrow f(\mC^I)$ such that $f=\iota\circ \pi_J$. In particular, the image of $f$ is isomorphic to a finite power of $\mC$.  
\end{lemma}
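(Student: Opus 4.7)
The plan is to prove this by induction on $|I|$; the case $|I|=0$ is trivial. Let $J := \{j \in I : \exists a \in C^I,\ f(a) \neq f(a+\chi_{\{j\}})\}$, the set of coordinates on which $f$ essentially depends. For $j \notin J$, $f$ is constant on each coset of $\langle \chi_{\{j\}}\rangle$ and hence factors through $\pi_{I \setminus \{j\}}$; iterating, $f = g \circ \pi_J$ for a homomorphism $g \colon \mC^J \to \mH$ essentially depending on every coordinate in $J$. If $J \subsetneq I$, the inductive hypothesis applied to $g$ completes the proof. So we may assume $J = I$, reducing the task to: show that $f$ itself is an isomorphism onto $f(\mC^I)$.

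Two closure observations, using only antisymmetry of $\mH$, are key: \emph{(a)} for any $x, y \in C^I$, the element $z := x + \chi_{\operatorname{supp}(y-x)}$ satisfies $x \to z \to y$ in $\mC^I$, so $f(x) = f(y)$ forces $f(z) = f(x)$; \emph{(b)} if $f(x) = f(y)$ and $x \to y$ is an edge (say $y = x + \chi_S$ for some nonempty $S$), then the restriction of $f$ to the triangle $\{x, y, 2y - x\}$ is a homomorphism $\mC \to \mH$ that is not an embedding, hence constant. Suppose for contradiction that $f$ is not injective; translating by an automorphism of $\mC^I$, we may assume $f(0) = f(b)$ with $b \neq 0$, and \emph{(a)} with $z = \chi_{\operatorname{supp}(b)}$ reduces to $f(0) = f(\chi_S)$ for $S := \operatorname{supp}(b) \neq \emptyset$. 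A short induction on $|T|$ using \emph{(a)} along the path $0 \to \chi_T \to \chi_S$ and \emph{(b)} for triangle closure then shows that $f$ takes the constant value $u := f(0)$ on the entire subgroup $\mathbb{Z}_3^S \subseteq \mathbb{Z}_3^I$.

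If $S = I$, then $f$ is constant, contradicting dependence on every coordinate; so $S \subsetneq I$. For each $a \in C^{I \setminus S}$, the restriction of $f$ to the ``slab'' $a + \mathbb{Z}_3^S$ is a homomorphism $\mC^S \to \mH$, whose image (by the inductive hypothesis, valid since $|S|<|I|$) is either a single point or a nontrivial power of $\mC$. The slab through $0$ is constant, but since $f$ depends on some coordinate of $S$ some slab must be nonconstant. Walking in $C^{I\setminus S}$ from $0$ to a nonconstant slab through single-coordinate steps reveals an adjacent pair of slabs $a$ and $a+\chi_{\{\ell\}}$ (with $\ell\in I\setminus S$), the first constant with value $u_a$ and the second nonconstant. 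The edges of $\mC^I$ between these slabs force the whole image of the nonconstant slab into the out-neighborhood of $u_a$; picking a nontrivial triangle $\{v'_0, v'_1, v'_2\}$ in that image, each non-loop edge $u_a \to v'_t$ lies in a unique triangle of $\mH$ by hypothesis. Examining further diagonal triangles of $\mC^I$ (in directions $\chi_{\{j,\ell\}}$ for $j\in S$) and repeatedly invoking the one-triangle hypothesis forces an unavoidable collision: two distinct triangles of $\mH$ sharing a common non-loop edge, a contradiction. Hence $f$ is injective, and a similar argument rules out extra non-loop edges in the spanned sub-digraph $f(\mC^I) \subseteq \mH$, so $\iota := f$ is the desired isomorphism $\mC^I \to f(\mC^I)$ with $f = \iota \circ \pi_I$. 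The main obstacle is this collision step, which requires careful bookkeeping of several triangles of $\mH$ and multiple invocations of the one-triangle hypothesis.
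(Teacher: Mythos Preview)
Your overall strategy matches the paper's: reduce to the case where $f$ depends on every coordinate, locate a place where $f$ collapses a $\mC$-line in one direction while embedding one in an adjacent direction, and extract two distinct copies of $\mC$ in $\mH$ through a common edge. The paper, however, reaches this configuration more directly and without induction. Rather than proving $f$ constant on an entire coordinate subgroup and then invoking an inductive hypothesis on slabs, the paper takes any coincidence $\iota(a)=\iota(b)$ and, via a short chain of antisymmetry steps, produces a single coordinate $j$ with $\iota(a-\chi_{\{j\}})=\iota(a)=\iota(a+\chi_{\{j\}})$; since $\iota$ depends on $j$, one then walks (changing one coordinate at a time, keeping the $j$-th fixed) to a point where the $j$-line embeds, isolating adjacent points $a'$ and $c'$ differing only in some coordinate $k\neq j$ with the $j$-line collapsed at $a'$ and embedded at $c'$. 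Restricting to the $\{j,k\}$-plane yields a map $\psi\colon\mC^2\to\mH$ with $\psi(0,\cdot)$ constant and $\psi(1,\cdot)$ injective.

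Your ``collision step'' is precisely this $\mC^2$ endgame, and the paper writes it out explicitly rather than leaving it as bookkeeping: the cycles $\psi(2,0)\to\psi(0,0)\to\psi(1,0)\to\psi(2,0)$ and $\psi(2,0)\to\psi(0,1)\to\psi(1,2)\to\psi(2,0)$ both pass through the edge $\psi(2,0)\to\psi(0,0)=\psi(0,1)$; a short case analysis (using antisymmetry when a cycle degenerates) then forces $\psi(1,0)=\psi(1,2)$, the desired contradiction. No repeated invocations across many diagonal triangles are needed---two triangles suffice. Finally, the inverse-homomorphism step is not really ``a similar argument'' to injectivity but a separate two-line computation that uses injectivity as an input: given $\iota(a)\to\iota(b)$ with $b=a+\chi_A-\chi_B$, set $c=a-\chi_A-\chi_B$ and $d=a-\chi_A$; then $\iota(a)\to\iota(b)\to\iota(c)\to\iota(a)$ and $\iota(a)\to\iota(b)\to\iota(d)\to\iota(a)$ are two triangles through the edge $\iota(a)\to\iota(b)$, so $\iota(c)=\iota(d)$, hence $c=d$ by the already-established injectivity, hence $B=\emptyset$ and $a\to b$.
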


\begin{proof}
	Let $$J\coloneqq \{j\in I\colon f \text{ depends on its } j\text{-th coordinate}\}.$$ We assume that $J$ is non-empty, for otherwise $f$   is  a constant map and the claim obviously holds. Clearly, then there exists an onto homomorphism $\iota\colon \mC^J\rightarrow f(\mC^I)$ such that $f=\iota\circ \pi_J$. It suffices to prove that $\iota$ is injective and its inverse is also a homomorphism. 

First we prove that $\iota$ is injective. Let us suppose to the contrary that $a$ and $b$ are different elements in $C^J$ such that $\iota(a)=\iota(b)$. Then there are some $A,B\subseteq J$ such that $ A\cap B=\emptyset$ and $b=a+\chi_{A}-\chi_{B}$. Without loss of generality, we may assume that $A$ is non-empty and $j\in A$. Then $b\to a - \chi_{A}\to a $ in $\mC^J$, whence $\iota(b)\to \iota(a - \chi_{A})\to \iota(a) $ in $\mH$. So by $\iota(a)=\iota(b)$ and antisymmetry in $\mH$, $\iota(a - \chi_{A})=\iota(a)$. 

Then
$a - \chi_{A}\to a - \chi_{\{j\}}\to a$ in $\mC^J$ and $\iota(a - \chi_{A})\to \iota(a - \chi_{\{j\}})\to \iota(a)$ in $\mH$. Hence by $\iota(a - \chi_{A})=\iota(a)$ and antisymmetry, $\iota(a - \chi_{\{j\}})= \iota(a).$ Now $ a\to a + \chi_{\{j\}}\to a - \chi_{\{j\}}$ in $\mC^J$ and 
$\iota(a)\to \iota(a + \chi_{\{j\}})\to \iota(a - \chi_{\{j\}})$ in $\mH$. By using antisymmetry again, $\iota(a + \chi_{\{j\}})= \iota(a).$
Thus for $a$ we have that $\iota(a -\chi_{\{j\}})= \iota(a)= \iota(a + \chi_{\{j\}}).$

Clearly, $\iota$ depends on its $j$-th coordinate since $f$ does. So there exists $c\in C^J$ such that $\iota(c - \chi_{\{j\}})$ and $\iota(c)$ are different.  By antisymmetry, $\iota(c - \chi_{\{j\}}),\ \iota(c)$ and $\iota(c + \chi_{\{j\}})$ are pairwise different. So we may assume that $a(j)=c(j)$ for such  $c$. Now there is a sequence in $C^J$ that starts with $a$ and ends with $c$ such that $j$-th components of the members are all equal to $a(j)$ and any two consecutive members differ in a single component. This sequence clearly has two consecutive members $a'$ and $c'$ such that $\iota(a' -\chi_{\{j\}})= \iota(a')= \iota(a' + \chi_{\{j\}})$, and $\iota(c' - \chi_{\{j\}}),\ \iota(c')$ and $\iota(c' + \chi_{\{j\}})$ are pairwise different. So we may assume that there is $k\neq j$ such that  $a(i)=c(i)$ for all $i\in J\setminus \{k\}$, $\iota(a -\chi_{\{j\}})= \iota(a)= \iota(a + \chi_{\{j\}})$, and $\iota(c - \chi_{\{j\}}),\ \iota(c)$ and $\iota(c + \chi_{\{j\}})$ are pairwise different. We may also assume that $a(k)=0$ and $c(k)=1$. 

Let $\mD$ be the subdigraph  spanned by all the tuples $d$ in $\mC^J$  such that $d(i)=a(i)$ for all $i\in J\setminus \{j,k\}$. Let $\varphi=\pi_{\{k,j\}}|_D$. Obviously, $\varphi$ is an isomorphism from $\mD$ to $\mC^2.$ Let $\psi= \iota|_D \varphi^{-1}$. Then  $\psi$ is a homomorphism from $\mC^2$ to $\iota(\mC^J)$ such that by the properties of $a$ and $c$, we have  that $\psi(0,0)=\psi(0,1)=\psi(0,2)$, and $\psi(1,0),\ \psi(1,1),\ \psi(1,2)$ are pairwise different. Then 
\begin{align*}\psi(2,0)\to \psi(0,0)&=\psi(0,1)\to\psi(1,0)\to\psi(2,0), \text{ and }\\ 
\psi(2,0)\to \psi(0,0)&=\psi(0,1)\to\psi(1,2)\to\psi(2,0) \text{ in } \mH.
\end{align*} 
Observe that the set $S$ of the $\psi$-values appearing on the preceding two lines has less than four elements. Indeed, for otherwise the vertices appearing on each line would yield a copy of $\mC$ in $\mH$, and these two different copies of $\mC$ would share the common edge $\psi(2,0)\to \psi(0,0)$, which is impossible. Then by antisymmetry of $\mH$, $S$ is a singleton or a 3-element set. So in both cases, $\psi(1,0)=\psi(1,2)$, a contradiction. Thus $\iota$ has to be injective.
	
We prove that the inverse of $\iota$ is a homomorphism.  Now let $a$ and $b$ be arbitrary elements of $C^J$. Let us assume that $\iota(a)\rightarrow \iota(b)$. Put $b=a+\chi_{A}-\chi_{B}$ for some $A,B\subseteq J, \ A\cap B=\emptyset$. Then let $c=a-\chi_{A}-\chi_{B}$, and $d=a-\chi_{A}$. Clearly, $b\rightarrow c\rightarrow a$ and $b\rightarrow d\rightarrow a$. Therefore, we have $\iota(a)\rightarrow \iota(b)\rightarrow \iota(c)\rightarrow \iota(a)$ and $\iota(a)\rightarrow \iota(b)\rightarrow \iota(d)\rightarrow \iota(a)$. As $\mH$ is antisymmetric and in $\mH$ every non-loop edge is in at most one copy of $\mC$, it follows that $\iota(c)=\iota(d)$, and thus $c=d$. This means exactly that $B=\emptyset$, and thus $a\rightarrow b$.
\end{proof}
 
We note that later in the paper, typical applications of Lemma \ref{image_of_powers} occur when $\mH$ is a subdigraph of some power of $\mC$. For any relational structure $\mQ$, let $\pol(\mQ)$ denote the clone of finitary polymorphisms of $\mQ$. 
By the help of the lemma, $\pol(\mC)$ is easily described.

\begin{corollary} \label{cloneofc}The clone $\pol(\mC)$ consists of all constant operations of $C$ and the essentially automorphism operations of $\mC$. 
\end{corollary}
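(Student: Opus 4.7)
The plan is to apply Lemma \ref{image_of_powers} directly, with $\mH$ taken to be $\mC$ itself. First I would verify that $\mC$ meets the hypotheses of the lemma: $\mC$ is antisymmetric (the only $2$-cycles in $\mC$ are the loops), and every non-loop edge of $\mC$ lies in exactly one copy of $\mC$ inside $\mC$, namely $\mC$ itself. So for any finitary polymorphism $f\colon \mC^I \to \mC$, where $I$ is finite, the lemma produces a subset $J\subseteq I$ and an isomorphism $\iota\colon \mC^J \to f(\mC^I)$ with $f = \iota\circ \pi_J$.

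Next I would observe that $f(\mC^I)$ is contained in $\mC$, so it has at most three elements. Since $|\mC^J| = 3^{|J|}$ and $\iota$ is a bijection onto $f(\mC^I)$, this forces $|J|\le 1$. If $|J|=0$, then $\pi_J$ is the constant map to the empty tuple and $f$ is a constant operation of $C$. If $|J|=1$, say $J=\{i\}$, then $\iota$ is an isomorphism from $\mC$ to a subdigraph of $\mC$ with three vertices, which must be $\mC$ itself; hence $\iota$ is an automorphism of $\mC$ and $f(x_1,\dots,x_n) = \iota(x_i)$, i.e.\ $f$ is an essentially automorphism operation of $\mC$.

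The converse inclusion is immediate: every constant map with value in $C$ is a polymorphism because every vertex of $\mC$ has a loop, and every essentially unary composition of a coordinate projection with an automorphism of $\mC$ is a polymorphism because projections and automorphisms preserve edges.

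There is no real obstacle here; the statement is a direct consequence of Lemma \ref{image_of_powers} once one notes that $\mC$ itself satisfies the lemma's hypotheses and has only three vertices, so that the ``finite power'' appearing in the conclusion must be $\mC^0$ or $\mC^1$.
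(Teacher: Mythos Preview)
Your proof is correct and follows exactly the route the paper intends: the paper presents the corollary as an immediate consequence of Lemma~\ref{image_of_powers} (``By the help of the lemma, $\pol(\mC)$ is easily described'') without spelling out the details, and your argument supplies precisely those details, namely that $\mC$ satisfies the lemma's hypotheses and that $|J|\leq 1$ is forced by $|C|=3$.
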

	
	The following example shows that Lemma~\ref{image_of_powers} fails for infinite powers. Let $I$ be any set, and let $\pow{I}$ denote the lattice of all subsets of $I$. The prime filters of $\pow{I}$ are usually called {\em ultra filters}.

\begin{example}
{\em For any set $I$ and ultra filter $\ultra\subseteq \pow{I}$, the map $$f\colon \mC^I\rightarrow \mC,\ a\mapsto a_\ultra\text{ where }a^{-1}(a_\ultra)\in \ultra$$ is a homomorphism.}
\end{example}

The following lemma gives that in fact, all non-constant homomorphisms from powers of $\mC$ to $\mC$ are of the form as in the preceding example, apart from an automorphism of $\mC$.
	
\begin{lemma}\label{all_hom}
	Let $I$ be an arbitrary set, and  $f\colon \mC^I\to \mC$  a non-constant homomorphism. Then there exist an ultra filter $\ultra\subseteq \pow{I}$ and an automorphism $\alpha$ of $\mC$ such that for all $a\in C^I$ $$f(a)=\alpha(a_\ultra) \text{ where }a^{-1}(a_\ultra)\in \ultra.$$
\end{lemma}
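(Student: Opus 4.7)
The plan is to first reduce to the case where $f$ fixes every constant tuple, then extract an ultrafilter $\ultra$ from the values of $f$ on the characteristic tuples $\chi_S$, and finally derive the formula by applying Lemma~\ref{image_of_powers} to the finite partitions of $I$ induced by any $a\in C^I$.

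The restriction $f|_{\{\bar 0,\bar 1,\bar 2\}}\colon\mC\to\mC$ is a homomorphism, hence by Corollary~\ref{cloneofc} either constant or an automorphism $\alpha$ of $\mC$. I first rule out the constant case: given any $a\in C^I$, split $I$ into the non-empty fibres among $a^{-1}(0),a^{-1}(1),a^{-1}(2)$ and define the ``reverse diagonal'' map $h\colon\mC^n\to\mC^I$ ($n\le 3$) by sending $(x_1,\dots,x_n)$ to the tuple taking value $x_k$ on the $k$-th fibre. Since $h$ is a digraph homomorphism, so is $f\circ h$, and Lemma~\ref{image_of_powers} forces it to be either constant or essentially an automorphism of one coordinate. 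The latter contradicts $f\circ h(\bar 0)=f\circ h(\bar 1)$, so $f\circ h\equiv c$ and $f(a)=c$ for all $a$, against non-constancy. Replacing $f$ by $\alpha^{-1}\circ f$, I may therefore assume $f(\bar j)=j$ for $j\in\{0,1,2\}$.

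For any finite partition $\pi=(P_1,\dots,P_n)$ of $I$ into non-empty parts, the analogous embedding $h_\pi\colon\mC^n\to\mC^I$ makes $f\circ h_\pi$ a finite-power homomorphism; the constant option is ruled out by $f\circ h_\pi(\bar c)=c$, and the essential-automorphism component must fix $0$, so $f\circ h_\pi(x_1,\dots,x_n)=x_{k(\pi)}$ for a unique index $k(\pi)\in\{1,\dots,n\}$. Define $\ultra=\{S\subseteq I\colon f(\chi_S)=1\}$. Then $I\in\ultra$ and $\emptyset\notin\ultra$ are immediate from $f(\bar 1)=1$ and $f(\bar 0)=0$; the ``ultra'' property is the observation that for $S\ne\emptyset,I$ the $2$-partition $(S,I\setminus S)$ gives $k(\pi)\in\{1,2\}$; upward closure follows by applying this to the refinement $(S,T\setminus S,I\setminus T)$ whenever $S\subseteq T$; closure under binary intersection follows by passing to the common refinement $(S\cap T,S\setminus T,T\setminus S,I\setminus(S\cup T))$ of $(S,I\setminus S)$ and $(T,I\setminus T)$ and reading off the chosen index.

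Finally, for an arbitrary $a\in C^I$, let $\pi_a$ be the partition of $I$ into the non-empty sets among $a^{-1}(0),a^{-1}(1),a^{-1}(2)$; writing $a=h_{\pi_a}(c_1,\dots,c_n)$ where $c_k$ is the value of $a$ on the $k$-th fibre, the previous paragraph gives $f(a)=c_{k(\pi_a)}$, and coarsening to the $2$-partition $(P_{k(\pi_a)},I\setminus P_{k(\pi_a)})$ shows that $a^{-1}(f(a))=P_{k(\pi_a)}\in\ultra$. Recomposing with $\alpha$ then yields the desired formula $f(a)=\alpha(a_\ultra)$. The main technical nuisance I anticipate is the bookkeeping when some fibres $a^{-1}(j)$ happen to be empty and the partitions degenerate, together with the consistency check that $k(\pi)$ behaves as expected under refinement and coarsening of partitions; this is the crux that makes each of the ultrafilter axioms go through cleanly.
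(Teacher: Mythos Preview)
Your proposal is correct and follows essentially the same route as the paper: both normalize via an automorphism of $\mC$, define $\ultra=\{S:f(\chi_S)=1\}$, and verify the ultrafilter axioms by restricting $f$ to the subdigraph of tuples constant on the blocks of a suitable finite partition of $I$ and invoking Corollary~\ref{cloneofc}. Your packaging via the projection index $k(\pi)$ and its behaviour under refinement is a bit more systematic than the paper's case-by-case treatment, and your final step (reading $f(a)$ directly from the fibre partition $\pi_a$) replaces the paper's separate extension from $\{0,1\}^I$ to $C^I$ via a triangle argument, but the underlying idea is identical.
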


\begin{proof}
	We assume first that $f(\chi_{\emptyset})=0$ and $f(\chi_{I})=1$. Note that in this case for all $X\subseteq I$ we have $\chi_{\emptyset}\rightarrow \chi_X\rightarrow \chi_I$, therefore $f(\chi_X)\in \{0,1\}$. Let us define $\ultra=\{X\subseteq I: f(\chi_X)=1\}.$ We first show that $\ultra$ is an ultra filter. We prove this by showing that
	
\begin{enumerate}
\item\label{it:central} $\ultra$ is closed under finite intersections,
\item\label{it:max} for all $X\subseteq I$ we have either $X\in \ultra$ or $I\setminus X\in \ultra$,
\item\label{it:empty} $\emptyset\not\in \ultra$.
\end{enumerate}

	Item~\ref{it:empty} is clear from the definition of $\ultra$. For the proofs of the other two items, let $X$ and $Y$ be any two subsets of $I$. We define an equivalence $\varrho$  on $I$ by $$(i,j)\in\varrho\text{ iff }\chi_X(i)=\chi_X(j)\text{ and }\chi_Y(i)=\chi_Y(j).$$ Let $k$ be the number of blocks of $\varrho$. Clearly, $k\leq 4$. Let $\mD$ be the subdigraph spanned by $$D=\{a\in C^I\colon (i,j)\in \rho \text{ implies }a(i)=a(j)\}$$ in $\mC^I$. We require the following claim.
	
%\noindent {\bf Claim. }	\emph{The map $f|_D$ is a projection to some coordinate $s\in I$.}
\begin{cla} The map $f|_D$ is a projection to some coordinate $s\in I$.\end{cla}

\begin{claimproof} Let $\tau\colon\mC^k\to \mD$	be the isomorphism that maps $(c_1,\dots,c_k)$ to $\sum_{i=1}^k c_i\chi_{B_i}$ where the $B_i$, $1\leq i\leq k$, are the blocks of $\varrho$. Then by Corollary~\ref{cloneofc}, $f|_D\tau\colon \mC^k\to \mC$ is a projection composed with an automorphism of $\mC$. Since $$\chi_\emptyset,\chi_I\in D,\ f(\chi_\emptyset)=0 \text{ and }f(\chi_I)=1,$$ we obtain that $f|_D\tau$ is a projection to some coordinate $j$ where $1\leq j\leq k$. Then $f|_D$ is a projection to any coordinate $s\in B_j$. 
\end{claimproof}
	
To prove item~\ref{it:central}, suppose that $X,Y\in\ultra$. Then by the claim  $f|_D=\pi_s|_D$ for some coordinate $s\in I$. Now $s$ must be in $X\cap Y$, since $\chi_X,\chi_Y\in D$, and $$f(\chi_X)=\pi_{s}(\chi_X)=f(\chi_Y)=\pi_{s}(\chi_Y)=1.$$ Thus $f(\chi_{X\cap Y} )=\pi_{s}(\chi_{X\cap Y} )=1$. 

To prove item~\ref{it:max}, suppose  that $X\not\in \ultra$.  By letting $X=Y$ and applying the claim, we obtain that  there is an $s\in I$ such that $f|_D=\pi_s|_D$. This time $s$ must be in $I\setminus X$, since $f(\chi_X)=\pi_{s}(\chi_X)=0.$  Hence  $f(\chi_{I\setminus X}) =\pi_s(\chi_{I\setminus X})=1$. Thus $\ultra$ is an ultra filter indeed.
	
	From the definition of $\ultra$, for any $a\in \{0,1\}^I$, $f(a)=1$ if and only if $a^{-1}(1)\in \ultra$. If $a\in \{0,1\}^I$, we also have that $f(a)\in \{0,1\}$, so $f(a)=0$ if and only if $a^{-1}(0)\in \ultra$. Let $a\in C^I$  such that its range contains $2$, say $a=\chi_{A}-\chi_{B}$ for two disjoint subsets $A$ and $B$ of $I$. Then the vertices $\chi_{A}+\chi_{B},\ a,\ \chi_{A}$ clearly form a copy of $\mC$ in $\mC^I$. As $\chi_{A}+\chi_{B},\chi_{A}\in \{0,1\}^I$, $f(a)$ is uniquely determined by the known values $f(\chi_{A}+\chi_B)$ and $f(\chi_{A})$. Thus $f$ is the unique homomorphism from $\mC^I$ to $\mC$ extending the partial definition $f(\chi_{\emptyset})=0$ and $f(\chi_{I})=1$. Hence $f$ must be the homomorphism given by $f(a)=a_\ultra$ for any $a\in C^I$.
	
For the general case we distinguish between two cases depending on whether $f(\chi_{\emptyset})=f(\chi_{I})$ or not. We first assume that $f(\chi_{\emptyset})\neq f(\chi_{I})$. As $f$ is a homomorphism, $f(\chi_{I})=f(\chi_{\emptyset})+1 $.  Let $\alpha$ be an automorphism of $\mC$ such that $\alpha\colon f(\chi_{\emptyset})\mapsto 0$. Then $\alpha^{-1}\circ f(\chi_{\emptyset})=0$ and $\alpha^{-1}\circ f(\chi_{I})=1$. Thus $\alpha^{-1}f(a)=a_\ultra$ for some ultra filter $\ultra\subseteq \pow{I}$ which is exactly what we wanted to show.
	
We are left with the case when $f(\chi_{\emptyset})=f(\chi_{I})$. We claim that in this case $f$ must be constant. Let $a$ be an arbitrary element in $C^I$. Put $a=\chi_{A}-\chi_{B}$ for two disjoint subsets $A$ and $B$ of $I$. Let $b=\chi_{A}+ \chi_{B}$ and $c=\chi_A$. Then $$\chi_{\emptyset}\rightarrow b\rightarrow \chi_{I}, \chi_{\emptyset}\rightarrow c\rightarrow \chi_{I}\text{ and }b\rightarrow a \rightarrow c.$$ This is only possible if $f(a)=f(b)=f(c)=f(\chi_{\emptyset})=f(\chi_{I})$.
\end{proof}

The preceding lemma is needed in the present paper  only to verify the following proposition.

\begin{figure}[H] 
\centering
\includegraphics[scale=0.7]{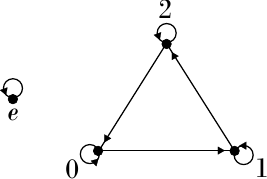}
\caption{The digraph $\mC_1$.} 
\label{fig:cegy}
\end{figure}

\begin{proposition}
 Let $\mC_1$ be the disjoint union of the one-element reflexive digraph and $\mC$ where the isolated element of  $\mC_1$ is denoted by $e$, see Figure \ref{fig:cegy}. Let $\alg C_1$ be an algebra on $C_1=\{e, 0,1,2\}$ whose term operations coincide with the operations of $\pol(\mC_1)$. Then the variety $\var V$ generated by $\alg C_1$ is a non-Taylor variety, and the non-trivial powers of $\mC$ are not compatible digraphs in $\var V$.
\end{proposition}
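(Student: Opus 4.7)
Plan: I treat the two assertions separately. For the non-Taylor part I use the equivalence $(1)\Leftrightarrow(3)$ of Theorem~\ref{taylor} and build a clone homomorphism from $\clo(\alg C_1)_{\Id}$ to the clone of projections. For the incompatibility of the non-trivial powers of $\mC$, I assume for contradiction that $\alg A\in\var V$ has base $C^n$ ($n\geq 1$) with $\mC^n$ compatible, and expose one idempotent binary polymorphism of $\mC_1$ whose image in $\alg A$ is forced to obey two incompatible identities.

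\emph{Not Taylor.} Let $f\colon\mC_1^k\to\mC_1$ be an idempotent polymorphism. The diagonal $\{(a,\ldots,a):a\in C\}$ lies in the connected component $C^k$ of $\mC_1^k$ and is sent by $f$ into $C$, so $f|_{C^k}\colon\mC^k\to\mC$ is a non-constant homomorphism. Lemma~\ref{all_hom}, combined with the finiteness of $k$ (forcing the ultrafilter to be principal), gives $f|_{C^k}=\alpha\circ\pi_i$ for some $i\in[k]$ and $\alpha\in\aut(\mC)$, and idempotency forces $\alpha=\id$, so $f|_{C^k}=\pi_i$ for a unique $i$. Setting $\Phi(f):=\pi_i$ manifestly respects projections, and for a composition $g(f_1,\ldots,f_m)$ with $\Phi(g)=\pi_j$ and $\Phi(f_l)=\pi_{i_l}$, the restriction to $C^k$ is $\pi_j\circ(\pi_{i_1},\ldots,\pi_{i_m})=\pi_{i_j}$, so $\Phi$ commutes with composition. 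Thus $\Phi$ is a clone homomorphism; composing with the natural surjection from $\clo(\var V)_{\Id}$ onto $\clo(\alg C_1)_{\Id}$ (coming from $\alg C_1$ generating $\var V$) shows that $\var V_{\Id}$ interprets in $\mathcal{S}\mathcal{E}\mathcal{T}$, so $\var V$ is not Taylor by Theorem~\ref{taylor}.

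\emph{No $\mC^n$ compatible.} Define
\[ f(x,y)=\begin{cases} y & \text{if } x=e,\\ x & \text{if } x\in C. \end{cases} \]
A brief case analysis on edges of $\mC_1^2$ shows $f\in\pol(\mC_1)$, and $f$ is idempotent. Let $c_e,c_0\in\pol(\mC_1)$ be the unary constants with values $e$ and $0$. In $\alg C_1$ (and therefore in $\alg A$) both identities $f(c_e(z),y)=y$ and $f(c_0(z),y)=c_0(z)$ hold; writing $a_e:=c_e^{\alg A}$ and $a_0:=c_0^{\alg A}$ (constants in $C^n$ because $c_e(x)=c_e(y)$ and $c_0(x)=c_0(y)$ are identities of $\alg C_1$), we get $f^{\alg A}(a_e,y)=y$ and $f^{\alg A}(a_0,y)=a_0$ for all $y\in C^n$. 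On the other hand $f^{\alg A}$ is an idempotent binary polymorphism of $\mC^n$, and a coordinatewise application of Lemma~\ref{all_hom} to $\mC^{2n}\to\mC$, combined with the idempotency analysis from Part~1, yields a subset $I\subseteq[n]$ such that $(f^{\alg A}(x,y))(j)=x(j)$ for $j\in I$ and $y(j)$ for $j\notin I$. The first identity forces $I=\emptyset$, the second $I=[n]$, contradicting $n\geq 1$.

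The main obstacle is identifying the right polymorphism $f$: one needs an idempotent polymorphism of $\mC_1$ which, together with the constants $c_e$ and $c_0$, imposes two incompatible coordinate-selection patterns on its translate $f^{\alg A}$ in $\alg A$. Once $f$ is located, the remainder is a routine coordinate chase exploiting the rigidity of non-constant polymorphisms of $\mC$ encoded in Lemma~\ref{all_hom}.
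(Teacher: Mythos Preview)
Your argument is essentially correct, and for Part~2 it is close in spirit to the paper's, but there are two points worth flagging.

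\textbf{Part 1.} Your construction of a clone homomorphism $\Phi$ from the idempotent polymorphisms of $\mC_1$ to projections is valid; the key step $f(C^k)\subseteq C$ (needed so that $\Phi$ respects composition) follows since $C^k$ is connected and idempotency forces the diagonal into $C$. This is correct but considerably more work than the paper's one--line argument: $\mC$ is a retract of $\mC_1$, Taylor identities survive retracts, and $\mC$ has no Taylor polymorphism by Corollary~\ref{cloneofc}.

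\textbf{Part 2.} The paper uses the same class of binary polymorphism you chose (one with $e$ as a unit), but extracts the contradiction differently: from the \emph{two--sided} unit identities $f'(u,y)=y$ and $f'(x,u)=x$ it follows that each coordinate map $f'_i$ depends both on the $i$-th coordinate of the first argument and on the $i$-th coordinate of the second; since the ultrafilter supplied by Lemma~\ref{all_hom} must contain exactly one of $I$ and $I'$, this is impossible. Your route via idempotency and the pair of identities $f(c_e(z),y)=y$, $f(c_0(z),y)=c_0(z)$ is a legitimate alternative.

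There is, however, a genuine gap in your write-up. The statement (and the paper's proof) covers \emph{all} non-trivial powers $\mC^I$, including infinite $I$, whereas your ``idempotency analysis from Part~1'' explicitly invokes finiteness to force the ultrafilter to be principal. For infinite $I$ that step needs an extra line: from $(f^{\alg A})_j(a,a)=a(j)$, vary $a$ so that $a(j)$ changes while all other coordinates stay fixed; this shows the ultrafilter $\ultra_j$ on $I\sqcup I'$ contains $\{j,j'\}$, hence is principal at $j$ or at $j'$, and your conclusion $(f^{\alg A})_j(x,y)\in\{x(j),y(j)\}$ follows as before. With that patch your proof goes through for arbitrary $I$.
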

\begin{proof}
Since the Taylor identities (including idempotency) are preserved under taking retract and $\mC$ that has no Taylor polymorphism is a retract of $\mC_1$, $\var V$ is a non-Taylor variety. To see that no non-trivial power of $\mC$ is a compatible digraph in $\var V$, suppose that $\mC^I$ is compatible in $\var V$ for some $I\neq\emptyset$. Observe that $\mC_1$ admits a binary polymorphism $f$ with a unit element $e$. Then $\mC^I$ must admit a binary polymorphism $f'$ with a unit element. We may conceive of $f'$ as a homomorphism from $\mC^{I\mathbin{\dot\cup}I'}$ to $\mC^I$ where $I'$ is a disjoint copy of $I$. So $f'=(f'_i)_{i\in I}$ where for each $i\in I$, $f'_i$ is a homomorphism from $\mC^{I\mathbin{\dot\cup}I'}$ to $\mC$. Since $f'$ has a unit element, for any $i\in I$, $f'_i$ must depend on each of its coordinates in $I\mathbin{\dot\cup}I'$. Let us fix an $i\in I$. Then by the preceding lemma,  there exist an ultra filter $\ultra\subseteq \pow{I\mathbin{\dot\cup}I'}$ and an automorphism $\alpha$ of $\mC$ such that for all $a\in C^{I\mathbin{\dot\cup}I'}$ $$f'_i(a)=\alpha(a_\ultra) \text{ where }a^{-1}(a_\ultra)\in \ultra.$$ Only one of $I$ and $I'$ is in $\ultra$, say $I\in\ultra$. Let $\vltra$ be the restriction of $\ultra$ to $I$. Clearly, $\vltra $ is an ultra filter on $I$. Moreover,  for any $a\in \mC^{I\mathbin{\dot\cup}I'}$ if $b=a|_I$, then $b_\vltra=a_\ultra$ and $$f'_i(a)=\alpha(b_\vltra) \text{ where }b^{-1}(b_\vltra)\in \vltra.$$ Thus, $f'_i$ only depends on the coordinates in $I$, a contradiction.
\end{proof}

The above proposition just shows that there exist non-Taylor varieties that have no non-trivial compatible $\mC$-powers.
So  the non-Taylor varieties cannot be characterized by the existence of compatible $\mC$-powers. In Section 4, we shall prove that  such a characterization is possible by the use of a proper class of compatible disjoint unions of $\mC$-powers. In the next section, we argue that there is no such characterization given by a single disjoint union of finite $\mC$-powers. The proof of this fact uses a rather interesting Maltsev characterization of the $n$-th powers of relational structures. 

\section{A Maltsev condition characterizing the $n$-th powers of relational structures}

In this section, relational structures are allowed to have infinitary relations. For any relational structure $\mA$ and equivalence $\nu$ of $A$, we define the quotient relational structure $\mA/\nu$ of the same signature as $\mA$ as follows.
The base set of $\mA/\nu$ equals the set $A/\nu$ of the blocks of  $\nu$, and for any $I$-ary relation $R$ of $\mA$ the corresponding $I$-ary relation of $\mA/\nu$ consists of the $I$-tuples $(a_i/\nu)_{i\in I}$ where $(a_i)_{i\in I}\in R$.

An operation $f\colon A^n\rightarrow A$ is called an {\em $n$-ary product decomposition operation} if $f$ satisfies the following identities 
\begin{align*}
f(x,\dots,x)=& x,\\
f(f(x_{1,1},\dots,x_{1,n}),\dots, f(x_{n,1},\dots,x_{n,n}))=& f(x_{1,1},\dots,x_{n,n}).
\end{align*}

\begin{remark}
\emph{Let $\mA= \prod_{j=1}^n\mB_j$. Then the operation $f:A^n\rightarrow A$ defined by
$$f((b_{1,1},\dots,b_{1,n}),\dots,(b_{n,1},\dots,b_{n,n}))=(b_{1,1},\dots,b_{n,n}) \text{ where } i,j\leq n,\ b_{i,j}\in B_j$$
is clearly an $n$-ary product decomposition polymorphism of $\mA$.}
\end{remark}

Perhaps after this remark, it is not surprising that the existence of an $n$-ary product decomposition polymorphism leads to an $n$-fold product decomposition of a relational structure as follows.

\begin{theorem}\label{prodecomp}
	Let $\mA$ be any relational structure which has an $n$-ary product decomposition polymorphism $f$. For each $i\leq n$, let $\nu_i$ denote the binary relation that consists of the pairs $(a,b)\in A^2$ such that for some sequence $c_j$, $j\neq i$, in $A$
	$$f(c_{1},\dots, c_{i-1}, a, c_{i+1},\dots,c_{n})=f(c_{1},\dots, c_{i-1}, b, c_{i+1},\dots,c_{n}).$$
	Then the following hold.
	\begin{enumerate}
	\item For each $i\leq n$, $\nu_i$ is an equivalence on $A$.
	\item The map $$\varphi\colon A\rightarrow \prod_{i=1}^n{{A}/{\nu_i}},\  a\mapsto (a/{\nu_1},\dots,a/{\nu_n}) $$ is an isomorphism from $\mA$ to the relational structure $\prod_{i=1}^n{{\mA}/{\nu_i}}$. 
	\end{enumerate}
	\end{theorem}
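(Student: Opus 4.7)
My plan is to first prove a substantially stronger characterization of $\nu_i$, from which every other claim falls out almost mechanically.

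\textbf{Strengthening the definition of $\nu_i$.} I claim that $(a,b)\in\nu_i$ if and only if $f(d_1,\dots,d_{i-1},a,d_{i+1},\dots,d_n)=f(d_1,\dots,d_{i-1},b,d_{i+1},\dots,d_n)$ for \emph{every} choice of $d_j\in A$, $j\neq i$. The nontrivial direction uses the second identity for $f$. Fix witnesses $c_j^*$ with $f(c_1^*,\dots,a,\dots,c_n^*)=f(c_1^*,\dots,b,\dots,c_n^*)$, and let $d_j$ be arbitrary. Apply the product decomposition identity with $x_{j,k}=d_j$ for $j\neq i$ (all $k$), $x_{i,k}=c_k^*$ for $k\neq i$, and $x_{i,i}=a$. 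By idempotency the inner $f(x_{j,1},\dots,x_{j,n})=d_j$ for $j\neq i$, while the $i$-th inner evaluation is $f(c_1^*,\dots,a,\dots,c_n^*)$; the identity says the outer $f$ of these equals $f(d_1,\dots,d_{i-1},a,d_{i+1},\dots,d_n)$. Repeating with $x_{i,i}=b$ gives the same outer value (same inner $i$-th output by hypothesis) but the diagonal side reads $f(d_1,\dots,b,\dots,d_n)$. The two diagonals therefore coincide.

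\textbf{Part (1).} Reflexivity of $\nu_i$ follows from idempotency (take $c_j=a$), symmetry is immediate, and transitivity is now trivial: if equality for all tuples holds for $(a,b)$ and for $(b,c)$, it holds for $(a,c)$.

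\textbf{Part (2), bijectivity.} For injectivity of $\varphi$, if $(a,b)\in\nu_i$ for every $i$, then by the strengthened characterization we may swap $a\mapsto b$ one coordinate at a time in $f(a,a,\dots,a)=a$ to obtain $a=f(b,a,\dots,a)=f(b,b,a,\dots,a)=\cdots=f(b,\dots,b)=b$. For surjectivity, given representatives $a_1,\dots,a_n$, put $a\coloneqq f(a_1,\dots,a_n)$; applying the product decomposition identity with $x_{j,k}=d_j$ for $j\neq i$ and $x_{i,k}=a_k$ gives $f(d_1,\dots,d_{i-1},a,d_{i+1},\dots,d_n)=f(d_1,\dots,d_{i-1},a_i,d_{i+1},\dots,d_n)$ for every $d_j$, so $(a,a_i)\in\nu_i$ and hence $\varphi(a)=(a_1/\nu_1,\dots,a_n/\nu_n)$.

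\textbf{Part (2), preservation of relations.} The forward direction is immediate from the fact that $f$ is a polymorphism, since the product relation on $\prod_i\mA/\nu_i$ is precisely the coordinatewise image of $R^{\mA}$ under $\varphi$. For the reverse direction, suppose $\varphi\bigl((a_s)_{s\in I}\bigr)\in R^{\prod_i\mA/\nu_i}$. Then for each $i\leq n$ there is a tuple $(b_s^{(i)})_{s\in I}\in R^{\mA}$ with $(a_s,b_s^{(i)})\in\nu_i$ for all $s$. Set $c_s\coloneqq f(b_s^{(1)},\dots,b_s^{(n)})$; since $f$ is a polymorphism and each column lies in $R^{\mA}$, we have $(c_s)_{s\in I}\in R^{\mA}$. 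By the strengthened characterization of $\nu_i$ I may replace each $b_s^{(i)}$ by $a_s$ in the $i$-th argument of $f$ without changing the value, so $c_s=f(a_s,\dots,a_s)=a_s$ by idempotency. Hence $(a_s)_{s\in I}\in R^{\mA}$, and $\varphi$ is a relational isomorphism.

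The only genuine obstacle is the strengthened characterization of $\nu_i$; without the ``for all $d_j$'' upgrade neither transitivity, injectivity, nor the patching argument for relations closes. After that step every remaining verification is a direct substitution into the product decomposition identity.
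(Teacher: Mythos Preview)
Your proof is correct and follows essentially the same route as the paper: both hinge on the key upgrade that $(a,b)\in\nu_i$ forces $f(d_1,\dots,a,\dots,d_n)=f(d_1,\dots,b,\dots,d_n)$ for \emph{all} $d_j$, proved by the same substitution into the product decomposition identity, and both deduce (1) and (2) from this in the same way. The only cosmetic difference is in the final relational step: the paper concludes $a_s=f(b_s^{(1)},\dots,b_s^{(n)})$ via the surjectivity formula together with injectivity of $\varphi$, whereas you swap $b_s^{(i)}\mapsto a_s$ one coordinate at a time---the underlying computation is identical.
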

\begin{proof} First we prove item (1). Let $i\leq n$. Clearly, $\nu_i$ is reflexive and symmetric. For proving transitivity, we require the following claim.

\begin{cla} For each $i\leq n$, $\nu_i$ consists of the pairs $(a,b)\in A^2$ such that for any sequence $d_j$, $j\neq i$, in $A$
	$$f(d_{1},\dots, d_{i-1}, a, d_{i+1},\dots,d_{n})=f(d_{1},\dots, d_{i-1}, b, d_{i+1},\dots,d_{n}).$$\end{cla}

\begin{claimproof} Suppose that $a\nu_i b$. Then there exist $c_j$, $j\neq i$, such that $$f(c_{1},\dots, c_{i-1}, a, c_{i+1},\dots,c_{n})=f(c_{1},\dots, c_{i-1}, b, c_{i+1},\dots,c_{n}).$$ By using the fact that $f$ is an $n$-ary product decomposition operation, we obtain that for any  $d_j$, $j\leq n$,
\begin{align*}
f(f(d_1,d_2,\dots,d_n),\dots, f(c_{1},\dots, c_{i-1}, a, c_{i+1},\dots,c_{n}), \dots, f(d_1,d_2,\dots,d_n))=\\ 
f(d_{1},\dots, d_{i-1}, a, d_{i+1},\dots,d_{n})
\end{align*} and
\begin{align*}
f(f(d_1,d_2,\dots,d_n),\dots, f(c_{1},\dots, c_{i-1}, b, c_{i+1},\dots,c_{n}), \dots, f(d_1,d_2,\dots,d_n))=\\ 
f(d_{1},\dots, d_{i-1}, b, d_{i+1},\dots,d_{n}).
\end{align*} 

Since the left sides of the preceding two equalities are equal, so are the right sides. This concludes the proof of the claim. \end{claimproof}

Suppose now that  $a\nu_i b\nu_i c$. Then by the claim, for any $d_j,\ j\neq i$, 
\begin{align*} 
f(d_{1},\dots, d_{i-1}, a, d_{i+1},\dots,d_{n})=&f(d_{1},\dots, d_{i-1}, b, d_{i+1},\dots,d_{n})=\\&f(d_{1},\dots, d_{i-1}, c, d_{i+1},\dots,d_{n}).
\end{align*}
In particular, $a\nu_i c$. Thus $\nu_i$ is transitive.

We prove item (2). Suppose that $\varphi(a)=\varphi(b)$. Then  for all $i\leq n$, $a\nu_i b$. Since $f$ is idempotent, by applying the above claim for the pair $(a,b)\in \nu_i$, $d_1=\dots =d_i=b$ and $d_{i+1}=\dots =d_n=a$ where $i\leq n$ we obtain   
$$a=f(a,a,\dots,a)=f(b,a,\dots,a)=f(b,b,a,\dots,a)=\dots=f(b,\dots,b)=b.$$ So $\varphi$ is injective. 

%By the use of the equalities in the preceding line and the properties of $f$
%$$a=f(a,\dots,a)=f(f(b,a,\dots,a),f(a,b,a,\dots,a),\dots,f(a,\dots,a,b))=f(b,\dots,b)=b.$$ 

To prove that $\varphi$ is surjective, it suffices to show that for any sequence $a_i,\ i\leq n,$ in $A$,  
$\varphi(f(a_1,\dots,a_n))=({a_1}/{\nu_1},\dots,{a_n}/{\nu_n})$, that is, for all $i$, $f(a_1,\dots,a_n)\nu_i a_i$. We prove the latter for $i=1$, the proof is analogous for the cases when $i\geq 2$. By the properties of  $f$ 
\begin{align*}
f(f(a_1,\dots,a_n),a_2,\dots, a_n)=f(f(a_1,\dots,a_n),f(a_2,\dots,a_2),\dots, f(a_n,\dots,a_n))=\\ 
f(a_1,\dots,a_n). 
\end{align*}
So $f(a_1,\dots,a_n)\nu_1 a_1$ indeed.

Clearly, $\varphi$ preserves the relations of $\mA$. We prove that its inverse also does. Let $\varrho$ be a $J$-ary relation of $\mA$, and suppose that $(\varphi(a_j))_{j\in J}\in \varrho'$ where $\varrho'$ is the relation of $\prod_{i=1}^n{{\mA}/{\nu_i}}$ corresponding to $\varrho$. By the definition of $\varrho'$, for each $i\leq n$ there is a tuple $(a_{j,i})_{j\in J}\in \varrho$ such that $a_j \nu_i a_{j,i}$ for all $j\in J$. Since $f$ preserves $\varrho$, $(f(a_{j,1},\dots,a_{j,n}))_{j\in J}\in \varrho$. In the preceding paragraph we saw that $\varphi(f(a_{j,1},\dots,a_{j,n}))=({a_{j,1}}/{\nu_{j,1}},\dots,{a_{j,n}}/{\nu_{j,n}})$. So by
$a_j \nu_i a_{j,i}$, $i\leq n$, we have that $\varphi(a_j)=\varphi(f(a_{j,1},\dots,a_{j,n}))$. Since $\varphi$ is injective, $a_j=f(a_{j,1},\dots,a_{j,n})$ for all $j\in J$. Thus $(a_j)_{j\in J}\in \varrho$, which concludes the proof.
\end{proof}

Our main goal in this section is to obtain a characterization of the $n$-th powers of relational structures via some Maltsev condition.
In order to do this, we require the following definition. An operation $f\colon A^n\rightarrow A$ is called an {\em $n$-ary power decomposition operation} if $f$ is an $n$-ary product decomposition 
operation and there is a $g\colon A\rightarrow A$ such that $f$ and $g$ satisfy the additional identities
\begin{align*}
g^n(x)=& x,\\
g(f(x_{1},x_{2},\dots,x_{n}))=& f(g(x_2),\dots,g(x_n),g(x_1)).
\end{align*}
Such a $g$ is called a {\em coordinate shift operation} with respect to $f$. In this case we also say that $f$ is an $n$-ary power decomposition operation with respect to $g$.

\begin{remark}
\emph{Let $\mA= \mB^n$. Then the operation $f:A^n\rightarrow A$ defined by
$$f((b_{1,1},\dots,b_{1,n}),\dots,(b_{n,1},\dots,b_{n,n}))=(b_{1,1},\dots,b_{n,n}) \text{ where } i,j\leq n,\ b_{i,j}\in B$$
is clearly an $n$-ary power decomposition polymorphism of $\mA$ with the coordinate shift automorphism $g$ defined by
$$g(b_{1},b_2,\dots,b_n)=(b_{2},\dots,b_n,b_1) \text{ where } i\leq n,\ b_{i}\in B.$$}
\end{remark} 

Now we prove that the existence of an $n$-ary power decomposition polymorphism with repect to a coordinate shift endomorphism implies that the relational structure is isomorphic to the $n$-th power of some relational structure.

\begin{theorem}\label{powdecomp}
	Let $\mA$ be any relational structure which has an $n$-ary power decomposition polymorphism $f$ with respect to a coordinate shift endomorphism $g$. For each $i\leq n$, let $\nu_i$ denote the equivalence defined in the statement of the preceding theorem. 
Then $g$ is an automorphism of $\mA$, and for any $i\leq n$, $g(\nu_i)=\nu_{i-1}$ where $i-1$ is meant by modulo $n$. Moreover,  $\mA$ is isomorphic to the $n$-th power of the relational structure ${\mA}/{\nu_1}$.
\end{theorem}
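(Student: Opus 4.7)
The plan is to proceed in three steps: first establish that $g$ is an automorphism, then show the cyclic behavior $g(\nu_i)=\nu_{i-1}$, and finally combine these with Theorem~\ref{prodecomp} to conclude that all the quotients $\mA/\nu_i$ are isomorphic, so the product decomposition becomes a power.

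First I would observe that $g$ is a bijection because $g^n=\id_A$, and its inverse equals $g^{n-1}$; since $g$ is an endomorphism, so is any of its iterates, in particular $g^{-1}$, which proves that $g$ is an automorphism of $\mA$. Next I would derive a ``shift identity for $g^{-1}$'' by substituting $x_i=g^{-1}(y_i)$ into the identity $g(f(x_1,\dots,x_n))=f(g(x_2),\dots,g(x_n),g(x_1))$ and then applying $g^{-1}$ to both sides. This yields
\[
g^{-1}(f(y_1,\dots,y_n))=f(g^{-1}(y_n),g^{-1}(y_1),\dots,g^{-1}(y_{n-1})),
\]
i.e.\ $g^{-1}$ is a ``reverse'' coordinate shift with respect to $f$.

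Next I would prove $g(\nu_i)=\nu_{i-1}$ (with indices taken modulo $n$). Using the characterization of $\nu_i$ given by the Claim in the proof of Theorem~\ref{prodecomp}, suppose $a\,\nu_i\,b$, so that $f(d_1,\dots,d_{i-1},a,d_{i+1},\dots,d_n)=f(d_1,\dots,d_{i-1},b,d_{i+1},\dots,d_n)$ for every choice of $d_j$, $j\neq i$. Applying $g$ and invoking the shift identity puts $g(a)$ and $g(b)$ in the $(i-1)$-st coordinate of $f$, while the remaining coordinates are filled by $g(d_j)$'s, which range over all of $A$ as the $d_j$'s do (since $g$ is a bijection). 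This gives $g(a)\,\nu_{i-1}\,g(b)$, and hence $g(\nu_i)\subseteq\nu_{i-1}$. The reverse inclusion follows from the analogous argument applied to $g^{-1}$ via the shift identity derived above, and thus $g(\nu_i)=\nu_{i-1}$.

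Finally, the automorphism $g$ together with $g(\nu_i)=\nu_{i-1}$ induces a well-defined bijection $\bar g\colon A/\nu_i\to A/\nu_{i-1}$, $a/\nu_i\mapsto g(a)/\nu_{i-1}$, which is an isomorphism of the quotient structures: a tuple $(a_j/\nu_i)_{j\in J}$ lies in the quotient of a relation $R$ iff there exist $(b_j)_{j\in J}\in R$ with $b_j\,\nu_i\,a_j$, and applying $g$ (which preserves $R$) together with $g(\nu_i)=\nu_{i-1}$ sends this to a witness that $(g(a_j)/\nu_{i-1})_{j\in J}$ is in the corresponding quotient relation, with the same argument working for $g^{-1}$. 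Iterating, $\mA/\nu_i\cong \mA/\nu_1$ for all $i\leq n$, and combining this with the decomposition $\mA\cong\prod_{i=1}^n \mA/\nu_i$ from Theorem~\ref{prodecomp} gives $\mA\cong(\mA/\nu_1)^n$, as desired.

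The only step requiring real care is the cyclic step $g(\nu_i)=\nu_{i-1}$: one must be precise about which coordinate slot of $f$ each argument ends up in after applying $g$, and about why the remaining arguments still range over all of $A$, and also double-check the inverse inclusion via the shift identity for $g^{-1}$. The rest is formal manipulation using results already established.
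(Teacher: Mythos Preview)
Your proof is correct and follows essentially the same approach as the paper's. The only notable difference is in how the reverse inclusion $\nu_{i-1}\subseteq g(\nu_i)$ is obtained: you derive a separate shift identity for $g^{-1}$ and argue symmetrically, whereas the paper simply iterates the forward inclusion $g(\nu_j)\subseteq\nu_{j-1}$ around the cycle $n$ times to get the chain $\nu_{i-1}=g^n(\nu_{i-1})\subseteq g^{n-1}(\nu_{i-2})\subseteq\dots\subseteq g(\nu_i)\subseteq\nu_{i-1}$, forcing equality throughout; both arguments are valid and about equally short.
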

\begin{proof} Since $g$ is an endomorphism with $g^n(x)=x$, $g$ is an automorphism of $\mA$ with inverse $g^{n-1}$.  
Let $i\leq n$. Let $(a,b)\in \nu_i$. we prove that $(g(a),g(b))\in \nu_{i-1}$. By $(a,b)\in \nu_i$, there are some elements
$c_i\in A, i\leq n$, such that $$f(c_1,\dots, c_{i-1},a, c_{i+1},\dots, c_n)=f(c_1,\dots, c_{i-1},b, c_{i+1},\dots, c_n).$$ By plugging both sides into $g$ and applying the identity satisfied by $f$ and $g$ we obtain 
\begin{align*}
&f(g(c_2),\dots, g(c_{i-1}),g(a), g(c_{i+1}),\dots, g(c_n), g(c_1))=\\ 
&f(g(c_2),\dots, g(c_{i-1}),g(b), g(c_{i+1}),\dots, g(c_n), g(c_1)).
\end{align*}
Thus $(g(a),g(b))\in \nu_{i-1}$, and hence $g(\nu_i)\subseteq \nu_{i-1}$ for all $i$. Then
$$\nu_{i-1}=g^n(\nu_{i-1})\subseteq g^{n-1}(\nu_{i-2})\subseteq\dots\subseteq g^2(\nu_{i+1}) \subseteq g(\nu_i)\subseteq \nu_{i-1}$$
where the indices are meant by modulo $n$. Therefore, $g(\nu_i)=\nu_{i-1}$ for all $i\leq n$. 

Finally, we prove that all of the factors $\mA/{\nu_i}$, $i\leq n$, of the product decomposition of $\mA$ are isomorphic to each other. Indeed, since $g$ is an automorphism, for all $i\leq n$                       
\[ \mA/{\nu_i}\cong g(\mA)/{g(\nu_i)}\cong \mA/{\nu_{i-1}}. \qedhere\]
\end{proof}

As an immediate consequence of the preceding remark and theorem, we obtain a characterization of the $n$-th powers by a strong Maltsev condition.

\begin{theorem}\label{powerdecomp} 
Any relational structure $\mA$ is the $n$-th power of a relational structure if and only if $\mA$ admits the strong Maltsev condition
\begin{align*}
f(x,\dots,x)=& x,\\
f(f(x_{1,1},\dots,x_{1,n}),\dots, f(x_{n,1},\dots,x_{n,n}))=& f(x_{1,1},\dots,x_{n,n}),\\
g^n(x)=& x,\\
g(f(x_{1},x_{2},\dots,x_{n}))=& f(g(x_2),\dots,g(x_n),g(x_1)).
\end{align*}
\end{theorem}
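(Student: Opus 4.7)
The plan is to obtain this characterization as a direct corollary of the remark preceding Theorem \ref{powdecomp} together with Theorem \ref{powdecomp} itself; essentially no further work is required beyond unpacking definitions.

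For the ``only if'' direction, suppose $\mA \cong \mB^n$ for some relational structure $\mB$. I would appeal to the remark preceding Theorem \ref{powdecomp}: the ``diagonal'' operation $f$ that extracts the $i$-th coordinate from the $i$-th argument is a polymorphism of $\mA$, and the cyclic shift $g(b_1,\dots,b_n)=(b_2,\dots,b_n,b_1)$ is an automorphism of $\mA$. A straightforward coordinate bookkeeping shows that this pair $(f,g)$ satisfies all four displayed identities; the first two are checked in the first remark of the section, and the last two reduce to the observation that shifting and then selecting the $i$-th coordinate produces the same output as selecting the $(i{+}1)$-th coordinate (indices mod $n$) and then shifting.

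For the ``if'' direction, suppose $\mA$ admits the Maltsev condition, i.e.\ there exist polymorphisms $f$ and $g$ of $\mA$ satisfying the four identities. By definition, $f$ is an $n$-ary power decomposition polymorphism of $\mA$ with respect to the coordinate shift endomorphism $g$. Theorem \ref{powdecomp} then applies verbatim and yields that $\mA$ is isomorphic to the $n$-th power of $\mA/\nu_1$, completing the proof.

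The only place where care is needed is the verification in the ``only if'' direction that the last identity, $g(f(x_1,\dots,x_n))=f(g(x_2),\dots,g(x_n),g(x_1))$, really does hold for the shift and diagonal on $\mB^n$; this is routine but is the lone nontrivial bit of calculation. Beyond that, the proposition is simply the combined content of the remark and Theorem \ref{powdecomp}.
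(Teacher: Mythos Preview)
Your proposal is correct and matches the paper's own treatment exactly: the paper states this theorem as ``an immediate consequence of the preceding remark and theorem,'' i.e., the remark before Theorem~\ref{powdecomp} for the ``only if'' direction and Theorem~\ref{powdecomp} for the ``if'' direction, which is precisely what you do.
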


We remark that for any $n$, the Maltsev condition that characterizes the $n$-fold product decomposition (the set of the first two identities for $f$ in the corollary) is a trivial Maltsev condition. Indeed, by taking $f$ as the first projection on the two element set, we obtain an $n$-ary product decomposition operation.

On the other hand, if a relational structure $\mA$ of at least two elements admits the Maltsev condition for the $n$-th power decomposition (the set of all four identities in the corollary), then $f$ must depend on each of its coordinates. This is because the equivalences $\nu_i$, $i\leq n$, corresponding to $f$ differ from the full relation $A^2$. In particular, for $n\geq 2$, the Maltsev condition that describes the $n$-th power decomposition is a non-trivial Maltsev condition. (This also follows from the fact that a 2-element set never is an $n$-th power for $n\geq 2$.)

Now we apply Theorem \ref{powerdecomp} to verify that certain digraphs are forbidden as compatible digraphs in the variety assigned naturally to the $n$-th power of a finite digraph of certain type.

\begin{corollary} 
Let $n\geq 2$, and let $\mA$ be a finite, reflexive, connected, directly indecomposable  digraph. Let $\alg A_n$ be the algebra on $A^n$ with the polymorphism of $\mA^n$ as basic operations, and $\var V$ the variety generated by $\alg A_n$.
Then $\var V$ has no compatible digraph of the form $\dot\bigcup_{j\in J} \mA^{n_j}$ where $n_j<n$ for all $j\in J$ and  $n_j>0$ for some $j\in J$.
\end{corollary}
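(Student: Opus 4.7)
The plan is to apply Theorem~\ref{powerdecomp} twice. First I would use it to see that $\mA^n$, being manifestly an $n$-th power of $\mA$, admits the strong Maltsev condition displayed in the theorem, explicitly realized by the diagonal polymorphism $f$ and the cyclic coordinate-shift automorphism $g$ of $\mA^n$ (as in the remark preceding Theorem~\ref{powdecomp}). These lie in $\clo(\alg A_n)$, and since strong Maltsev conditions are preserved under $\mathbf{H}$, $\mathbf{S}$, and $\mathbf{P}$, every algebra of $\var V$ inherits term operations satisfying the same identities.

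Then, suppose toward contradiction that $\mD = \dot\bigcup_{j\in J}\mA^{n_j}$ is a compatible digraph in $\var V$ with $n_j<n$ for all $j$ and $n_{j_0}>0$ for some $j_0$. Choosing $\alg D\in\var V$ on base $D$ whose basic operations preserve the edges of $\mD$, the term operations of $\alg D$ corresponding to $f$ and $g$ become polymorphisms of $\mD$ satisfying the $n$-th power decomposition Maltsev condition. The other direction of Theorem~\ref{powerdecomp} then yields $\mD\cong\mB^n$ for some digraph $\mB$.

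From here I would analyze the connected components. Since $\mA$ is reflexive and connected, so is each $\mA^{n_j}$, hence the components of $\mD$ are precisely the pieces $\mA^{n_j}$. Likewise, $\mB$ is reflexive (inherited from $\mD$), and the components of $\mB^n$ are the products $\mB_{i_1}\times\cdots\times\mB_{i_n}$ over $n$-tuples of components of $\mB$. Taking all factors equal, $\mB_l^n$ is a component of $\mB^n$, hence isomorphic to some $\mA^{n_{j(l)}}$ with $n_{j(l)}<n$. Invoking the classical unique prime factorization theorem for finite connected reflexive digraphs with respect to direct product, and using that $\mA$ is prime (directly indecomposable), I would count prime factors: if $\mB_l$ has $s_l$ prime factors then $\mB_l^n$ has $n s_l$, while $\mA^{n_{j(l)}}$ has $n_{j(l)}<n$ prime factors; so $n s_l=n_{j(l)}<n$, forcing $s_l=0$, and hence $\mB_l$ is a single loop.

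Consequently every component of $\mB$ is a single vertex, so $\mB^n\cong\mD$ has only singleton components. This forces $|A|^{n_j}=1$, and therefore $n_j=0$, for every $j\in J$, contradicting $n_{j_0}>0$. The hard part will be the appeal to unique prime factorization for finite connected reflexive digraphs under direct product: it is a classical structural theorem (going back to McKenzie in the reflexive graph setting), and citing or adapting it in the reflexive-digraph setting is the only substantive ingredient beyond Theorem~\ref{powerdecomp}.
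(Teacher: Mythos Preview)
Your proposal is correct and follows essentially the same route as the paper's proof: both use Theorem~\ref{powerdecomp} to write the putative compatible digraph as an $n$-th power $\mB^n$, observe that $\mB$ is reflexive so components of $\mB^n$ are $n$-fold products of components of $\mB$, and then invoke McKenzie's unique factorization for finite connected reflexive digraphs to reach a contradiction. The only cosmetic difference is that the paper singles out one nontrivial component $\mB_0$ of $\mB$, deduces $\mB_0\cong\mA^m$ with $m\geq 1$, and compares cardinalities $|A|^{mn}=|A|^{n_j}$, whereas you count prime factors to force every component of $\mB$ to be a singleton; these are equivalent endings.
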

\begin{proof}
Let us suppose to the contrary that $\var V$ has a compatible digraph of the form $\dot\bigcup_{j\in J} \mA^{n_j}$. Since $\mA^n$ and so  $\var V$ admit the Maltsev condition given in the preceding theorem, by the theorem there is a digraph $\mB$ such that $\mB^n\cong\dot\bigcup_{j\in J} \mA^{n_j}$. Now by the assumption for the disjoint union, at least one of the connected components of $\mB$, say $\mB_0$ has more than one element. Since the disjoint union is reflexive, $\mB$ and all of its components are reflexive. Hence $\mB_0^n$ is a component of $\mB^n$. So $\mB_0^n\cong \mA^{n_j}$ for some ${n_j}>0$. Hence by the unique decomposition theorem for finite connected reflexive digraphs, see Corollary 4.7 in \cite{M}, $\mB_0\cong \mA^{m}$ for some $m\geq 1$. This implies that $|A|^{mn}=|B_0|^n= |A|^{n_j}$ which contradicts the fact that $n>n_j$.
\end{proof}

The immediate corollary below shows indeed that there exists no disjoint union $\mU$ of finite $\mC$-powers such that some of the $\mC$-powers are non-trivial and  $\mU$ is a compatible digraph in any non-Taylor variety. In other words, the non-Taylor varieties cannot be characterized by the existence  of a fixed compatible disjoint union of finite $\mC$-powers.

\begin{corollary} 
Let $n\geq 2$. Let $\alg C_n$ be the algebra on $C^n$ with the polymorphism of $\mC^n$ as basic operations, and $\var V$ the variety generated by $\alg C_n$.
Then $\var V$ has no compatible digraph of the form $\dot\bigcup_{j\in J} \mC^{n_j}$ where $n_j<n$ for all $j\in J$ and  $n_j>0$ for some $j\in J$.
\end{corollary}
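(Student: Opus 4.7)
The plan is to derive this corollary as an immediate specialization of the preceding one by verifying that $\mC$ satisfies the four hypotheses imposed on the general digraph $\mA$ there: that it is finite, reflexive, connected, and directly indecomposable.

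Three of these are immediate from the definition of $\mC$ given at the beginning of Section~2: $|C|=3$ is finite, every vertex carries a loop so $\mC$ is reflexive, and the three vertices $0,1,2$ together with the edges $(0,1),(1,2),(2,0)$ make the reflexive--symmetric--transitive closure of the edge relation the full relation, so $\mC$ is connected. The only point that needs a brief argument is direct indecomposability. Since $|C|=3$ is prime, any isomorphism $\mC\cong \mB_1\times\mB_2$ forces $|B_1|\cdot|B_2|=3$, hence one of $|B_1|,|B_2|$ equals $1$; this means the corresponding factor is a one-element reflexive digraph, and the decomposition is trivial. Thus $\mC$ is directly indecomposable.

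With these verifications, the previous corollary applied to $\mA=\mC$ yields the statement verbatim, since the algebra $\alg C_n$ and the variety $\var V$ in the present corollary are precisely the instances $\alg A_n$ and $\var V$ from the previous one. No further work is required; in particular, the main content (the uniqueness of direct decomposition for finite connected reflexive digraphs, via Corollary~4.7 of \cite{M}, together with the Maltsev characterization of $n$-th powers given by Theorem~\ref{powerdecomp}) is already carried by the previous corollary and need not be reproduced. There is no substantive obstacle; the only thing to be careful about is making the primality of $|C|=3$ explicit so that direct indecomposability is not silently assumed.
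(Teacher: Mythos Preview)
Your proposal is correct and matches the paper's treatment: the paper presents this as an ``immediate corollary'' of the preceding one with no separate proof, and your verification that $\mC$ is finite, reflexive, connected, and directly indecomposable is exactly what is needed to instantiate $\mA=\mC$. The only microscopic point one might add is that the one-element factor in a putative decomposition $\mC\cong\mB_1\times\mB_2$ must carry a loop (since $\mC$ has edges, hence so must each factor), so it really is the unit for the product; this is implicit in your argument and poses no difficulty.
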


%\begin{definition}
%Let $\var V$ be a non-trivial variety. We define the compatible digraph $\mF$ in $\var V$ on the vertex set of the free algebra $\alg F$ freely generated by three elements $x,y$ and $z$. The edge relation of $\mF$ is defined to be the subalgebra of $\alg F^2$ generated by the set of pairs $$\{(x,x),\ (y,y), \ (z,z),\ (x,y), \ (y,z), \ (z,x)\}.$$  
%\end{definition}
%By the  definition, for any $u,v\in F$, $u\to v$ in $\mF$ if and only if there is a 6-ary term $h$ of $\var V$ such that $u(x,y,z)=h(x,y,z,x,y,z) $ and $v(x,y,z)=h(x,y,z,y,z,x)$. Since $\var V$ is non-trivial, $x,y$ and $z$ are pairwise different elements in $F$. Notice that the vertices $x,y$ and $z$ with the six edges of the above generating set constitute a subdigraph of $\mF$ isomorphic to $\mC$. We shall see  later that this subdigraph is a spanned subdigraph of $\mF$ if and only if $\var V$ is a non-Taylor variety.

\section{Characterizations of Taylor varieties}
In this section, we prove the main result of the paper. In order to do this, we require the following Lemma. 

\begin{lemma}\label{clonehom} Let $\mG$ be a relational structure and $\mK\subseteq\mG$. Let $S$ be subclone of $\pol(\mG)$. If for any $f\in \pol(\mK)$, there is unique extension $f^*\in S$, then the map $$\varphi\colon \pol(\mK)\to  S,\ f\mapsto f^*$$ is a clone homomorphism.
\end{lemma}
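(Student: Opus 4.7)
The plan is to verify the two defining properties of a clone homomorphism for $\varphi$: that it sends projections to projections, and that it commutes with composition. The uniqueness hypothesis does the heavy lifting in each case, so the proof is essentially a bookkeeping exercise once we observe that every natural candidate extension produced by the clone operations of $S$ automatically restricts correctly to $\mK$.

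First I would handle projections. For each $n$ and $i\le n$, the projection $\pi_i^n$ of $\pol(\mG)$ lies in $S$ (since $S$ is a subclone) and its restriction to $K^n$ equals the projection $\pi_i^n$ of $\pol(\mK)$. By the uniqueness of extensions, this forces $\varphi(\pi_i^n) = \pi_i^n$, so $\varphi$ preserves projections.

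Next I would handle composition. Take $f \in \pol(\mK)$ of arity $n$ and $g_1,\dots,g_n \in \pol(\mK)$ of arity $m$, and set $h = f(g_1,\dots,g_n) \in \pol(\mK)$. Consider the operation $H \coloneqq \varphi(f)\bigl(\varphi(g_1),\dots,\varphi(g_n)\bigr)$ on $G$; since $S$ is closed under composition, $H \in S$. For any tuple $(k_1,\dots,k_m) \in K^m$, each $\varphi(g_i)(k_1,\dots,k_m) = g_i(k_1,\dots,k_m) \in K$ because $\varphi(g_i)$ restricts to $g_i$ on $\mK$ and $g_i$ is a $K$-valued operation; applying $\varphi(f)$ to these $K$-values then gives $f(g_1(\bar k),\dots,g_n(\bar k)) = h(\bar k)$ since $\varphi(f)$ restricts to $f$ on $\mK$. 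Therefore $H$ is an element of $S$ extending $h$, and the uniqueness hypothesis yields $\varphi(h) = H$, that is, $\varphi\bigl(f(g_1,\dots,g_n)\bigr) = \varphi(f)\bigl(\varphi(g_1),\dots,\varphi(g_n)\bigr)$.

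The main step to be careful about is the observation, used implicitly above, that the composite $H$ genuinely extends $h$ on $\mK$, i.e.\ that $\mK$ is closed under $H$ and $H$ agrees with $h$ there; this follows immediately from $\varphi(g_i)|_{K^m} = g_i$ together with $\varphi(f)|_{K^n} = f$. Apart from this, the argument is a direct application of the uniqueness clause, so no genuine obstacle arises.
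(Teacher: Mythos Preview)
Your proof is correct and follows essentially the same approach as the paper: both verify preservation of projections by noting that the $\mG$-projections lie in $S$ and restrict correctly, and both verify preservation of composition by observing that the $S$-composite of the extensions restricts to the $\mK$-composite, so uniqueness forces equality. The only cosmetic difference is that the paper checks the single-substitution form $f(g(x_1,\dots,x_m),y_2,\dots,y_n)$ while you check the full superposition $f(g_1,\dots,g_n)$, but the underlying idea is identical.
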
  
\begin{proof}
Clearly, any projection operation $f$ on $\mK$ extends to a projection operation of $\mG$. By uniqueness of $f^*$, the latter projection operation must be $f^*$. We prove that $\varphi$ commutes with composition. Let $f$ be an $n$-ary and $g$ be an $m$-ary operation in 
$\pol(\mK).$ Then the restriction of $f^*(g^*(x_1,\dots,x_m),y_2,\dots, y_n)$ onto $\mH$ is $f(g(x_1,\dots,x_m),y_2,\dots, y_n)$. So by uniqueness of the extension in $S$
\[(f(g(x_1,\dots,x_m),y_2,\dots, y_n))^*=f^*(g^*(x_1,\dots,x_m),y_2,\dots, y_n). \qedhere\]\end{proof}

The following characterization of non-Taylor varieties plays a crucial role in the proof of our main result.

\begin{lemma}\label{majdnem_jo}
Let $\var V$ be a variety. Then $\var V$ is a non-Taylor variety if and only if there exist a non-empty  set $T$ and sets $H_t,\ t\in T$, such that some of the $H_t$ are non-empty and  $\dot\bigcup_{t\in T}\mC^{H_t}$ is a compatible digraph in $\var V$. If  $\var V$ is a locally finite non-Taylor variety, then $T$ and $H_t,\ t\in T,$ can be chosen to be finite.
\end{lemma}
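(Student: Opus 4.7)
For the $(\Leftarrow)$ implication, I take an algebra $\alg A\in\var V$ realizing the compatible digraph $\mU=\dot\bigcup_{t\in T}\mC^{H_t}$, and fix some $t_0\in T$ with $H_{t_0}\neq\emptyset$. Every idempotent $n$-ary term operation $f$ of $\alg A$ preserves each connected component (by idempotency pinning the diagonal together with the connectedness of $\mC^{H_t}$), so $f$ restricts to an idempotent polymorphism of $\mC^{H_{t_0}}$. Reading this restriction coordinate-by-coordinate as, for each $h\in H_{t_0}$, a homomorphism $f_h\colon\mC^{H_{t_0}\times n}\to\mC$, I apply Lemma~\ref{all_hom}: each $f_h$ is either constant or of the form $\alpha_h(a_{\ultra_h})$. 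The idempotency equation $f_h(a,\ldots,a)=a(h)$, tested on the indicator tuples $\chi_X$ and $2\chi_X$, rules out the constant case and forces $\alpha_h=\id$ and $\ultra_h$ to be the principal ultrafilter at a unique pair $(h,i(h))$ with $i(h)\in\{1,\ldots,n\}$, giving the normal form $f(a_1,\ldots,a_n)(h)=a_{i(h)}(h)$. Fix any ultrafilter $\ultra_0$ on $H_{t_0}$; since $\{1,\ldots,n\}$ is finite, the essentially unique value $i_{\ultra_0}$ of $i$ on $\ultra_0$ exists, so $\xi(f):=i_{\ultra_0}$ is a well-defined map. A direct check shows $\xi$ sends projections to projections and commutes with composition (on a set in $\ultra_0$ the outer selector $i(h)$ is essentially constant), so $\xi$ is a clone homomorphism from the idempotent part of $\clo(\alg A)$ to $\proj$. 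Composing with the natural map $\clo(\var V)_{\Id}\to\clo(\alg A)_{\Id}$ produces the clone homomorphism $\clo(\var V)_{\Id}\to\proj$ needed to conclude, via Theorem~\ref{taylor}(3), that $\var V$ is non-Taylor.

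For the $(\Rightarrow)$ implication, Theorem~\ref{taylor}(3) supplies a clone homomorphism $\sigma\colon\clo(\var V)_{\Id}\to\proj$, and the plan is to use $\sigma$ to manufacture an algebra $\alg A\in\var V$ together with a compatible digraph of the form $\{e\}\dot\cup\mC^H$ (so $T=\{1,2\}$, $H_1=\emptyset$, $H_2=H$ with $H$ nonempty). The natural template is $A=C^H\sqcup\{e\}$ with $e$ an absorbing element: an idempotent basic operation $f$ of $\var V$ is defined on $C^H$ by the $\sigma(f)$-th projection and returns $e$ whenever any input is $e$, while a non-idempotent basic operation returns $e$ identically. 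By design every such operation is a polymorphism of $\{e\}\dot\cup\mC^H$, so compatibility is immediate; identities between two terms that are each either purely-idempotent or each containing a non-idempotent subterm follow respectively from $\sigma$ being a clone homomorphism and from both sides collapsing to $e$. The main obstacle is the subtle mixed case: an identity in $\var V$ between two idempotent terms at least one of which is built using non-idempotent subterms, because $\sigma$ does not see the internal structure of such a term and the naive recursive evaluation inside $\alg A$ may disagree with $\sigma$'s value. My plan to overcome this is to replace the template $A$ by a suitable quotient of $F_{\var V}(X)\sqcup\{e\}$ for $X$ large enough, where the congruence identifies with $e$ exactly the equivalence classes of elements whose every representation passes through a non-idempotent basic operation, and where the remaining classes inherit a coordinate-wise $\mC^H$-structure from a single fiber of $\sigma$ over the generators in $X$; the $\var V$-algebra structure descends automatically from $F_{\var V}(X)$, and digraph compatibility on the non-$e$ classes follows from the normal-form classification established in the backward direction.

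For the addendum on locally finite $\var V$, running the construction above with $X$ finite makes $F_{\var V}(X)$ finite, so both $T$ and each $H_t$ become finite; in fact the minimal choice $T=\{1,2\}$, $H_1=\emptyset$, $|H_2|=1$ already suffices, yielding the compatible $\{e\}\dot\cup\mC$.
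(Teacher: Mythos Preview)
Your $(\Leftarrow)$ argument is correct but much heavier than needed: since some $H_{t_0}\neq\emptyset$, the digraph $\mC$ is a retract of $\dot\bigcup_{t\in T}\mC^{H_t}$ (project the $\mC^{H_{t_0}}$-component to one coordinate and send every other component to a fixed vertex). As Taylor terms survive retracts and $\mC$ has none (Corollary~\ref{cloneofc}), non-Taylorness follows in one line. Your route through Lemma~\ref{all_hom} and an ultrafilter on $H_{t_0}$ does produce a clone homomorphism to projections, but it re-derives what the retract gives for free.

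Your $(\Rightarrow)$ direction is not a proof; it is a plan with an unresolved obstacle. You correctly isolate the difficulty---identities of $\var V$ equating an idempotent term with a term containing non-idempotent subterms---but the proposed fix (``a suitable quotient of $F_{\var V}(X)\sqcup\{e\}$'') never specifies the congruence, and the candidate description (collapse to $e$ exactly those elements ``whose every representation passes through a non-idempotent basic operation'') does not in general define a congruence block. For instance, if $\var V$ has an idempotent binary $f$ and a constant $c$ with $f(c,x)=x$, then idempotent and non-idempotent representations interleave and the proposed partition is not preserved by the operations. The paper's proof sidesteps this entirely: it works in the free algebra $\alg F$ on three generators, defines $\mF$ as a genuinely compatible digraph on $\alg F$, indexes its connected components by \emph{all} unary term operations $t\in T$ (not just two), and sets $H_t$ to be the set of nonconstant homomorphisms $\mF_t\to\mC$; the clone homomorphism to projections is used only to establish $H_{\id}\neq\emptyset$ (Claim~2), while the compatible digraph $\dot\bigcup_{t\in T}\mC^{H_t}$ is obtained by quotienting $\alg F$ and then extending via Lemma~\ref{clonehom}.

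Finally, the last sentence of your addendum is false. You claim that for locally finite non-Taylor $\var V$ one may take $T=\{1,2\}$ with $|H_2|=1$, i.e.\ that $\{e\}\dot\cup\mC$ is always compatible. But the variety generated by the algebra of all polymorphisms of $\mC^2$ is locally finite and non-Taylor, and by the final corollary of Section~3 it admits no compatible digraph of the form $\dot\bigcup_{j}\mC^{n_j}$ with every $n_j<2$ and some $n_j>0$; in particular $\{e\}\dot\cup\mC$ is not compatible there. This also shows that your forward-direction target of a two-component union $\{e\}\dot\cup\mC^H$ with small $H$ is genuinely too restrictive, not merely unproved.
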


\begin{proof}
	
	%We first show that (2) implies (1). Let us assume that $\dot\bigcup_{t\in T}\mC^{I_t}$ is a compatible digraph in $\var V$ and for some $t\in T$, $I_t\neq\emptyset$. Then $\mC$ is a retract of $\dot\cup_{t\in T}\mC^{I_t}$. As the existence of a Taylor polymorphism is inherited under taking retract and  $\mC$ admits no Taylor operation, $\var V$ is a non-Taylor variety.

	%Next we prove that (1) implies (2).
 
First	 we prove the ``only if'' part of the first statement of the  lemma. So we assume that $\var V$ is a non-Taylor variety. Let  $\alg F$ be the free algebra freely generated by three elements $x,y$ and $z$ in $\var V$. We define a compatible digraph $\mF$ of $\alg F$ with letting the edge relation of $\mF$ be the subalgebra of $\alg F^2$ generated by the set of pairs $$\{(x,x),\ (y,y), \ (z,z),\ (x,y), \ (y,z), \ (z,x)\}.$$  By definition, for any $u,v\in F$, $u\to v$ in $\mF$ if and only if there is a 6-ary term $h$ of $\var V$ such that $u(x,y,z)=h(x,y,z,x,y,z) $ and $v(x,y,z)=h(x,y,z,y,z,x)$ in $\alg F$.

	Clearly, $x,y$ and $z$ are pairwise different elements in $F$. Indeed, being non-Taylor, $\var V$ does not satisfy the identity $x=y$. Notice that the vertices $x,y$ and $z$ with the six edges of the above generating set constitute a subdigraph of $\mF$ isomorphic to $\mC$. This subdigraph of $\mF$ is called $\mC_\mF$. The subdigraph $\mC_\mF$ is a spanned subdigraph of $\mF$, for otherwise, there would be a 6-ary term $h$ such that $$(h(x,y,z,x,y,z),h(x,y,z,y,z,x))\in \{(y,x),(z,y),(x,z)\}$$ which is impossible, because the resulting identities, for example $$h(x,y,z,x,y,z)=y\text{ and } h(x,y,z,y,z,x)=x,$$  make $h$ to be a Taylor term for $\var V$.

We conceive of the underlying set $F$ of $\alg F$ as the set of ternary term operations of $\alg F$.  Let $T$ be the set of unary term operations of $\alg F$. For any $t\in T$, we write $F_t$ for the set of the ternary term operations $u\in F$ such that $u(x,x,x)=t(x)$. Let $\mF_t$ be the subdigraph of $\mF$ spanned by $F_t$, $t\in T$. We claim that $\mF_t,\ t\in T,$ are exactly the connected components of $\mF$. Indeed, if $u\to v$ in $\mF$, then $$u(x,x,x)=h(x,x,x,x,x,x)=v(x,x,x)=t(x)$$ for some 6-ary term $h$, and thus each connected component is contained in some $F_t$. On the other hand, by using the compatibility of $\mF$ in $\alg F$, for any $t\in T$ and $u\in F_t$, we have $u(x,y,z)\to u(x,y,x)$ and $u(x,x,x)\to u(x,y,x)$. Hence for all $u\in F_t$,  $u$ and $t(x)=u(x,x,x)$ are in the same component of $\mF$. Thus, for any $t\in T$, $\mF_t$ is connected.

For any $t\in T$, let $H_t$ denote the set of non-constant homomorphisms from $\mF_t$ to $\mC$. Let $T_0\coloneqq \{t\in T\colon H_t=\emptyset \}$ and $T^+\coloneqq T\setminus T_0$. For any $t\in T_0$, we define $0_t$ to be  a new constant  and for any $u\in F_t$, we let $[u]=0_t$. For any $t\in T^+$ and for any $u\in F_t$, we let $[u]$ be the function $H_t\rightarrow \mC, \varphi\mapsto \varphi(u)$.  

For any $t\in T$, we define the digraph $\mK_t$ as follows. The vertices of $\mK_t$ are the $[u],\ u\in F_t,$ and the edges of $\mK_t$ are the $([u],[v])$ where $u\to v$ in $\mF_t$. We remark that by the definition, for any $t\in T$, $t\in T_0$ if and only if $\mK_t$ is a one-element digraph, in which case the vertex set of $\mK_t$ is $\{0_t\}$ and its edge relation is $\{(0_t,0_t)\}$. We define $\mK$ to be the  digraph whose connected components coincide with the digraphs $\mK_t$, $t\in T$, that is, $\mK$ is the disjoint union of the $\mK_t$, $t\in T$.   Clearly, $\mK$ is a reflexive subdigraph of the digraph  $\mG\coloneqq\dot\bigcup_{t\in T}\mC^{H_t}$ where the copy of $\mC^{H_t}$ is identified with $\mK_t$ whenever $t\in T_0$. We prove the required properties of the digraph $\mG$ by the help of the next four claims. 
	
\begin{cla1}The kernel of the map $\psi\colon F\mapsto K, u\mapsto [u]$ is a congruence of $\alg F$.\end{cla1}  

\begin{claimproof} To verify Claim 1, it suffices to show that $\ker(\psi)$ is preserved by every unary polynomial operation of $\alg F$. Every unary polynomial operation of $\alg F$ is of the form $s(x,u_2,\dots,u_n)$ where $u_2,\dots,u_n\in F$ and $s$ is an $n$-ary term operation of $\alg F$. So let us assume that $(u,v)\in \ker(\psi)$. Then $u,v\in F_t$ for some $t$. We prove that $(s(u,u_2,\dots,u_n),s(v,u_2,\dots,u_n))$ is also in $\ker(\psi)$. As $\mF$ is reflexive, the digraph $\hat\mF_t=\mF_t\times\{u_2\}\times\dots\times \{u_n\}$ is isomorphic to $\mF_t$, and hence $\hat\mF_t$ is connected. Since $\hat\mF_t\subseteq\mF^n$ and $s$ is a polymorphism of $\mF$, $s(\hat\mF_t)$ is also connected. Clearly,
$s(u,u_2,\dots,u_n),s(v,u_2,\dots,u_n)\in s(\hat F_t)$. Hence, there exists $t'\in T$ such that $s(u,u_2,\dots,u_n),s(v,u_2,\dots,u_n)\in F_{t'}$. In the case when $t'\in T_0$, this yields immediately that $(s(u,u_2,\dots,u_n),s(v,u_2,\dots,u_n))\in  \ker(\psi)$. Now suppose that $t'\in T^+$, and let $\varphi\in H_{t'}$ be arbitrary. Then the map $$\nu\colon F_t\mapsto \mC, x\mapsto \varphi(s(x,u_2,\dots,u_n))$$ is a homomorphism from $\mF$ to $\mC$. Since $(u,v)\in \ker(\psi)$, this implies in particular that $\nu(u)=\nu(v)$. Therefore $\varphi(s(u,u_2,\dots,u_n))=\varphi(s(v,u_2,\dots,u_n))$. Since this holds for all $\varphi\in H_{t'}$, we obtain that  $(s(u,u_2,\dots,u_n),s(v,u_2,\dots,u_n))\in  \ker(\psi)$. \end{claimproof}

	Now, we define an algebra $\alg K$ in $\var V$ whose underlying set is $K$ and whose basic operations are defined by $$u_{\alg K}([x_1],\dots,[x_n])\coloneqq [u_{\alg F}(x_1,\dots,x_n)]$$ for all basic operations $u_{\alg F}$ of $\alg F$ and $x_1,\dots,x_n\in F$ provided $u_{\alg F}$ is $n$-ary. By Claim~1, the basic operations of $\alg K$  are well-defined. Now it is clear that $\psi$ is an onto homomorphism from $\alg F$ to $\alg K$ and $\alg K\cong \alg F/{\ker(\psi)}$.   Moreover, it is also clear from our construction that $\mK$ is compatible with $\alg K$, and $\mK\cong\mF/{\ker(\psi)}$.

\begin{cla2} The vertices $[x],[y],[z]$ are pairwise different.\end{cla2} 

\begin{claimproof} As $\var V$ is non-Taylor, its full idempotent reduct $\var V_{\Id}$ interprets in the variety  $\var S\var E\var T$. Let $\id\in T$ denote the identity term operation of $\alg F$. Recall that $F_{\id} $ is the set of all ternary idempotent term operations of $\alg F$. We define two algebras $\alg A$ and $\alg B$ on the same underlying set $F_{\id}$.  Algebra $\alg A$ is defined to be an algebra in $\var V_{\Id}$ whose term operations are the restrictions of the idempotent term operations of $\alg F$ to $F_{\id}$.  Algebra $\alg B$ is defined to be the unique algebra on $F_{\id}$ in $S\var E\var T$. Notice that $\alg A$ is the free algebra freely generated by three elments  in $\var V_{\Id}$.  Then, a set of identities  in two variables that holds for the free algebra $\alg A$, also holds for $\var V_{\Id}$. In particular, $\alg A$ has no Taylor term operation, since $\var V_{\Id}$ interprets in $\var S\var E\var T$ .  So there is a clone homomorphism $\zeta$ from $\clo(\alg A)$ to $\clo(\alg B)$. We know that the vertices of $\mF_{\id}$ are represented by the the idempotent ternary term operations of $\alg F$, and the edges of $\mF_{\id}$ are given by certain identities of $\var V_{\Id}$ of the form $$u(x,y,z)=h(x,y,z,x,y,z) \text{ and }v(x,y,z)=h(x,y,z,y,z,x).$$ Now $\zeta$ preserves these identities, maps the ternary term operations of $\alg A$ to ternary projections and fixes the ternary projections on $F_{\id}$. Hence $\zeta$ restricted to $F_{\id}\subseteq \clo(\alg A)$ is a retraction from $\mF_{\id}$ onto $\mC_\mF$. This retraction composed with an isomorphism from $\mC_\mF$ to $\mC$  yields a homomorphism from $\mF_{\id}$ to $\mC$. Now this homomorphism is in $H_{\id}$ and separates the elements $x,y,z$. Hence  $[x],[y],[z]\in K$ are pairwise different as we claimed. \end{claimproof}
	
	Note that in $\mK$ we have the edges $[x]\to [y],\ [y]\to [z],\ [z]\to [x]$, as well as all the loops. Also, since $\mK_{\id}\subseteq\mC^{H_{\id}}$, there are no more edges between the vertices $[x],[y],[z]$ in $\mK$. This means in other words that the subdigraph induced on $\{[x],[y],[z]\}$ is isomorphic to $\mC$. We write $\mC_{\mK}$ for this subdigraph. %Let $\mC_{\mF}\coloneqq \mF|_{x,y,z}$. Since $\mC_{\mK}$ is a homomorphic image of $\mC_{\mF}$ it follows that $\mC_{\mF}$ is also isomorphic to $\mC$.

\begin{cla3} For any $t\in T$,  $t\in T^+$ if and only if $|\{[t(x)],[t(y)],[t(z)]\}|=3$.\end{cla3}  

\begin{claimproof} The ``if'' direction of Claim 3 is obvious. Let us assume that $t\in T^+$, i.e. there exists a non-constant homomorphism $\varphi$ from $\mF_t$ to $\mC$. Then $\varphi(u(x,y,z))\neq \varphi(t(x))$ for some $u\in F_t$. So $[u(x,y,z)]\neq [t(x)]= [u(x,x,x)]$. Let $u'$ denote the restriction of $u$ to $\mC_{\mK}^3$. Since $\mK$ is compatible in $\alg K$, $u'$ is a homomorphism from $\mC_{\mK}^3$ to $\mK_t\subseteq \mC^{H_t}$, and $u'$ is not constant since
$[u(x,y,z)]= u([x],[y],[z])$ and $[u(x,x,x)]= u([x],[x],[x])$ are contained in the image $u(\mC_{\mK}^3)$ of $u'$. 
Then by Lemma~\ref{image_of_powers},  $u'=\iota\pi_J$ where $\pi_J$ is the projection to the coordinates in $J$ for some non-empty $J$,  and $\iota$ is an isomorphism.   Then $\pi_J$ and hence $u'$ map the diagonal elements of $\mC_{\mK}^3$ to separate elements. So $[t(x)]=u([x],[x],[x]),\ [t(y)]=u([y],[y],[y])$, and $[t(z)]=u([z],[z],[z])$ are pairwise different. \end{claimproof}

 Now let $f$ be an $n$-ary operation in $\pol(\mK)$. Clearly, $f$ restricted to any connected component of $\mK^n$ is a homomorphism to some component of $\mK$. It is also clear that any component $\mD$ of $\mK^n$ equals some product $\prod\limits_{j=1}^n \mK_{t_{j}}$ where $t_1,\dots,t_n\in T$. So $f|_D$ maps $\prod\limits_{j=1}^n \mK_{t_{j}}$ to some $\mK_t,\ t\in T$. 
Clearly, if $t\in T_0$, since  $\mK_t$ is one-element, $f|_D$ is a constant map. If $t\in T^+$, since $\mK_t\subseteq \mC^{H_t}$, $f|_D=(f_s)_{s\in H_t}$ where the $f_s$ are homomorphisms from $\prod\limits_{j=1}^n \mK_{t_j}$ to $\mC$. 
	
\begin{cla4} If $t\in T^+$, then any of the homomorphisms $f_s$ as above is either constant or there exist a unique $1\leq \ell\leq n$ with $t_{\ell}\in T^+$ and a unique homomorphism $\varphi\in H_{t_{\ell}}$ such that $f_s(y_1,\dots,y_n)=y_{\ell}(\varphi)$.\end{cla4}
	
\begin{claimproof} For the proof of Claim 4, let $$\mP\coloneqq t_1(\mC_{\mK})\times \dots \times t_n(\mC_{\mK}).$$ Then we have $\mP\subseteq \prod\limits_{j=1}^n \mK_{t_{j}}$. By Claim 3, we know that if $t_{j}\in T^+$ then $t_{j}(\mC_{\mK})$ is isomorphic to $\mC$, otherwise $t_{j}(\mC_{\mK})$ is a singleton. Therefore, $\mP$ is isomorphic to a power of $\mC$. Hence, by Corollary~\ref{cloneofc}, the restriction of $f_s$ to $P$ either is a constant map or there is a unique $\ell$ such that $t_{\ell}\in T^+$ and $f_s|_P$ only depends on the $\ell$-th coordinate.

Let $([u_1],\dots,[u_n])$ be any tuple in $\prod\limits_{j=1}^n \mK_{t_{j}}$ where the $u_j$ are arbitrary elements in $F_{t_j}$ for $1\leq j\leq n$. Note that in this case $$\mP\subseteq u_1(\mC_\mK^3)\times\dots\times u_n(\mC_\mK^3)\subseteq \prod\limits_{j=1}^n \mK_{t_{j}}.$$ By Lemma~\ref{image_of_powers}, we know that each $u_1(\mC_\mK^3)$ is isomorphic to a finite power of $\mC$, therefore $$u_1(\mC_\mK^3)\times\dots\times u_n(\mC_\mK^3)$$ is also isomorphic to a finite power of $\mC$. Hence, by Corollary~\ref{cloneofc}, the restriction of $f_s$ to $$u_1(C_\mK^3)\times\dots\times u_n(C_\mK^3)$$ is a constant map or there is an $1\leq\ell'\leq n$ such that $t_{\ell'}\in T^+$ and the restriction only depends on the $\ell'$-th coordinate. Since $P\subseteq u_1(C_\mK^3)\times\dots\times u_n(C_\mK^3)$, it follows that $\ell'=\ell$,  that is, $\ell'$ does not depend on our choice of the tuple $([u_1],\dots,[u_n])$. 
	
	This implies, that either $f_s$ is constant, or there exists a unique $\ell$ such that $t_{\ell}\in T^+$ and $f_s(y_1,\dots,y_n)=q(y_l)$ where $q$ is a non-constant map from $\mK_{t_{\ell}}$ to $\mC$. In the latter case,  $q$ must be a homomorphism, since $f_s$ is a homomorphism from the product of reflexive digraphs $\mK_{t_j}$, $1\leq j\leq n$.  Let $\varphi\coloneqq q\circ \psi$. Since $\psi(F_{t_\ell})=K_{t_\ell}$, $\varphi$ is not constant, and hence
$\varphi\in H_{t_\ell}$. Then $q$ must be the projection to the coordinate $\varphi$. Indeed,
for any element $u\in F_{t_\ell}$ $$q([u])=q(\psi(u))=\varphi(u)=[u](\varphi).$$ The uniqueness of $\varphi$ is now obvious.\end{claimproof}

By Claim 2, $H_{\id} \neq \emptyset$. So to finish the proof of the ``only if'' part of the first statement of the  lemma, we prove that $\mG= \dot\bigcup_{t\in T}\mC^{H_t}$ is a compatible digraph in $\var V.$ As $\mK$ is compatible in $\var V$, it suffices to verify that there is a clone homomorphism from $\pol(\mK)$ to $\pol(\mG).$ We prove the existence of such a clone homomorphism by an application of Lemma \ref{clonehom} as follows. Clearly, $\mK\subseteq \mG$. We define a subclone $S$ of $\pol(\mG).$ The clone $S$ consists of the $n$-ary operations $g\in \pol(\mG))$ such that for each component $\mD$ of the digraph $\mG^n$ the restriction $g|_D$ is either constant or $g(D)\subseteq C^{H_t}$ for some $t\in T^+$ and each of the coordinate maps of $g|_D$ is a projection to some coordinate or constant. It is easy to see that $S$ is indeed a subclone. By Claim 4, every $n$-ary operation of $f\in\pol (\mK)$ uniquely extends to an operation of $f^*\in S$. So by Lemma \ref{clonehom}, it follows that there exists a clone homomorphism from $\pol (\mK)$ to $S$, and hence to $\pol(\mG).$

To conclude the proof of the first statement of the lemma, we assume that $\dot\bigcup_{t\in T}\mC^{H_t}$ is a compatible digraph in $\var V$ and for some $t\in T$, $H_t\neq\emptyset$. Then $\mC$ is a retract of $\dot\bigcup_{t\in T}\mC^{H_t}$. As the existence of a Taylor polymorphism is inherited under taking retract and  $\mC$ admits no Taylor operation, $\var V$ is a non-Taylor variety.

The finiteness statement of the lemma  is obvious by our construction of the $\mC$-power components of $\mG$ in the above proof.
\end{proof}

In order to prove the primeness of the filter of Taylor interpretability types in $\LL$, we require a modified version of the preceding lemma.

\begin{corollary} \label{kappa} Let $\var V$ be a variety. Then $\var V$ is a non-Taylor variety if and only if
for any large enough cardinal $\kappa$, the digraph $\dot\bigcup_{\lambda\leq \kappa}2^\kappa\mC^{\lambda}$, where $\lambda$ stands for a cardinal and $2^\kappa\mC^{\lambda}$ denotes the disjoint union of $2^\kappa$-many copies of $\mC^{\lambda}$, is a compatible digraph in $\var V$. 
\end{corollary}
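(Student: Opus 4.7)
For the ``if'' direction, I plan a short argument exploiting that $\mC$ admits no Taylor polymorphism. Suppose the digraph $\dot\bigcup_{\lambda\leq\kappa}2^\kappa\mC^\lambda$ is a compatible digraph in $\var V$ via some algebra $\alg A$, and let $t$ be a Taylor term of $\var V$ of arity $n$. Pick any $\mC^1$-component $C_0$; then $t_\alg A$ is a polymorphism of the digraph, so it sends the connected set $C_0^n$ into a single component of $\mU$, and by idempotency the image of the diagonal already lies in $C_0$, forcing $t_\alg A(C_0^n)\subseteq C_0$. Hence $t_\alg A|_{C_0^n}$ is a Taylor polymorphism of $\mC$, contradicting Corollary \ref{cloneofc} (whose non-constant polymorphisms of $\mC$ are essentially automorphisms and so, once idempotency is imposed, become projections, which are readily seen to be non-Taylor).

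For the ``only if'' direction, I first apply Lemma \ref{majdnem_jo} to obtain a compatible digraph $\mG_0 = \dot\bigcup_{t \in T}\mC^{H_t}$ in $\var V$ via some algebra $\alg A$, with some $H_t \neq \emptyset$. I fix $\kappa$ with $\kappa \geq \max(|T|, \sup_t |H_t|, \aleph_0)$, pass to the power $\alg A^{2^\kappa} \in \var V$, and consider the components of its compatible digraph $\mG_0^{2^\kappa}$, which are of the form $\mC^{S_\tau}$ for $\tau\colon 2^\kappa \to T$ with $|S_\tau|=\sum_{i<2^\kappa}|H_{\tau(i)}|$. The plan is to carve out of these components a sub-universe of $\alg A^{2^\kappa}$ whose inherited compatible digraph is isomorphic to $\dot\bigcup_{\lambda \leq \kappa}2^\kappa\mC^\lambda$. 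A cardinal-arithmetic count (essentially $\binom{2^\kappa}{\lambda}=2^\kappa$ for every $\lambda \leq \kappa$) should provide the required $2^\kappa$ many $\tau$'s yielding a component of each size $\lambda \leq \kappa$, and the rigidity of polymorphisms between $\mC$-powers encoded in Corollary \ref{cloneofc} together with the component-to-component analysis in Claim 4 of the proof of Lemma \ref{majdnem_jo} should control the closure of the selected collection under the term operations of $\alg A^{2^\kappa}$.

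The main obstacle I anticipate is handling the simultaneous requirement of exact multiplicities and closure under all term operations, together with the edge cases in which $\mG_0$ lacks certain component sizes, most notably when $T_0=\emptyset$ so that $\mG_0$ has no trivial component or when $|T|=1$. The first I expect to resolve by first replacing $\mG_0$ with an intermediate product $\mG_0^I$ chosen so as to enrich the spectrum of component sizes (including producing trivial components when needed), and the second by observing that in the extreme case $\var V$ degenerates essentially to $\mathcal{S}\mathcal{E}\mathcal{T}$, for which every reflexive digraph is trivially a compatible digraph so the conclusion holds automatically.
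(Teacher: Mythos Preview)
Your ``if'' direction is correct and essentially what the paper does (it simply invokes the retraction argument at the end of the proof of Lemma~\ref{majdnem_jo}).

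Your ``only if'' direction has a concrete gap. If $\mG_0$ has no trivial component (the case $T_0=\emptyset$ you flag), then neither does any power $\mG_0^I$: every component of $\mG_0^I$ is a product $\prod_{i\in I}\mC^{H_{\tau(i)}}$ of nontrivial $\mC$-powers, hence itself a nontrivial $\mC$-power. So your proposed fix of passing to an intermediate $\mG_0^I$ ``to produce trivial components when needed'' cannot work. Even when $T_0\neq\emptyset$ the count is wrong: the number of trivial components of $\mG_0^{2^\kappa}$ is $|T_0|^{2^\kappa}$, which equals $1$ when $|T_0|=1$ and is at least $2^{2^\kappa}$ when $|T_0|\ge 2$; in neither case can you select exactly $2^\kappa$ of them while remaining closed under term operations, since a polymorphism of $\mG_0$ may well send a tuple of trivial components to a point sitting inside a nontrivial component. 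Your appeal to Claim~4 of the proof of Lemma~\ref{majdnem_jo} does not help here: that claim constrains the coordinate maps of a homomorphism \emph{into} a fixed $\mC$-power, not the assignment of product-components to output components, which is exactly the data your subalgebra selection would have to control.

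The paper avoids the subalgebra route entirely. After forming $\mG_1=\mG^\kappa$, it builds a new compatible digraph $\mG_2$ by a primitive-positive definition: the vertices of $\mG_2$ are the homomorphisms $\mC\to\mG_1$, and its edge relation is read off from homomorphisms of a fixed six-vertex gadget $\mD$ into $\mG_1$. A direct computation shows that the components of $\mG_2$ are $\mC$-powers realizing \emph{every} exponent $\lambda\le\kappa$, including $\lambda=0$, each occurring at most $2^\kappa$ times. The exact multiplicity is then achieved in one stroke by setting $\mG_3=\mG_2\times\mL$, where $\mL$ is the digraph on $2^\kappa$ points with the equality edge relation (compatible for free on any $\var V$-algebra of that size); this product replaces each component of $\mG_2$ by $2^\kappa$ isomorphic copies. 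The missing idea in your plan is precisely this pp-construction with the gadget $\mD$, which manufactures the full spectrum of exponents---trivial components included---without any delicate closure bookkeeping.
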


\begin{proof}
We prove the ``only if'' part of the claim, the other direction is immediate from the preceding lemma.
Let $\var V$ be an arbitrary non-Taylor variety. Since $\var V$ is a non-Taylor variety, by the preceding lemma, there are some cardinals $\kappa_t,\ t\in T,$ not all zero, such that $\mG=\dot\bigcup_{t\in T}\mC^{\kappa_t}$ is a compatible digraph in $\var V$. Let $\kappa$ be  an arbitrary infinite cardinal with $\kappa\geq\kappa_t$ for all $t\in T$ and $\kappa\geq |T|$. By taking  the $\kappa$-th power of  $\mG$, we obtain a compatible digraph $\mG_1$ in $\var V$ such that $\mG_1$ is also a disjoint union of $\mC$-powers and the largest exponent that appears in the $\mC$-power components of $\mG_1$ is $\kappa.$ Clearly, $\mG_1$ has at most $2^\kappa$ components and $|\mG_1|=2^\kappa$.

By a primitive positive definition as follows, we construct a compatible digraph $\mG_2$ from $\mG_1$ in $\var V$ such that  $\mG_2$ is a sum of $\mC$-powers where each cardinal $\lambda\leq\kappa$ appears as an exponent of some $\mC$-power component. The vertex set of $\mG_2$ is the set of homomorphisms from $\mC$ to $\mG_1$. To define the edge set of $\mG_2$ we use the digraph $\mD$ given in Figure \ref{fig:gadget}. We set $f\to g$ in $\mG_2$ iff the map given  by $a_i\mapsto f(i)$, $b_i\mapsto g(i)$, $0\leq i\leq 2$, is a homomorphism from $\mD$ to $\mG_1.$

\begin{figure}[H] 
\centering
\includegraphics[scale=1]{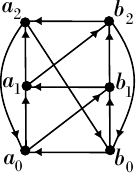}
\caption{The digraph $\mD$.} 
\label{fig:gadget}
\end{figure}

It is easy to see that $\mG_2$ is a reflexive digraph. Since $\mD$ is connected, any connected component of $\mG_2$ is contained in the set of the homomorphisms from $\mC$ to $\mC^\nu$ where  $\mC^\nu$ is a connected component of the digraph $\mG_1$. Let $(d_0,d_1,d_2)\in G_2$ an arbitrary element, that is, either $\{d_0,d_1,d_2\}$ is a singleton or induces a subdigraph isomorphic to $\mC$ in $\mG_1$.
Our goal is to describe the connected component $\mE$ of $(d_0,d_1,d_2)$ in $\mG_2$. In the argument what follows, we assume that all of the entries of $(d_0,d_1,d_2)\in G_2$ are in fact in $C^\nu$. 

Let $s$ be any ordinal less than $\nu$. Let us assume that $(d_0,d_1,d_2)\to (e_0,e_1,e_2)$ in $\mG_2$. Then we have that $e_0,e_1,e_2\in C^\nu$. Clearly, the map  given by $a_i\mapsto d_i(s)$ and $b_i\mapsto e_i(s)$, $0\leq i\leq 2$, is a homomorphism from $\mD$ to $\mC$. Then as one checks easily, for each $s<\nu$, either all of the $d_i(s)$ and $ e_i(s)$ coincide where $0\leq i\leq 2$, or the $d_i(s)$ are pairwise different. In the latter case, either $e_i(s)=d_i(s)$ for all $0\leq i\leq 2$ or $e_i(s)=d_i(s)+1$ for all $0\leq i\leq 2$.

By the preceding paragraph,  $\mE$ consists of all 3-tuples $(e_0,e_1,e_2)\in G_2$ such that for each $s<\nu$, $e_0(s)=e_1(s)=e_2(s)=d_0(s)$ if $d_0(s)=d_1(s)=d_2(s)$ and $(e_0(s),e_1(s),e_2(s))\in \{(0,1,2),(2,0,1),(1,2,0)\}$ otherwise.  The edges in $\mE$ are given by $(e_0,e_1,e_2)\to(f_0,f_1,f_2)$  iff for each $s<\nu$, either   $e_i(s)=f_i(s)$ for all $0\leq i\leq 2$ or   $f_i(s)=e_i(s)+1$ for all $0\leq i\leq 2$. Then $\mE$ is a product of  one-element reflexive digraphs and  copies of the subdigraph of $\mC^3$ spanned by $\{(0,1,2),(2,0,1),(1,2,0)\}$ that is isomorphic to $\mC$. Hence $\mE$ is isomorphic to $\mC^{\nu'}$ for some cardinal $\nu'\leq \nu$. Clearly, for any cardinal $\nu'\leq \nu$,  if we set $(d_0(s),d_1(s),d_2(s))=(0,1,2)$ for the coordinates $s<\nu'$ and  $(d_0(s),d_1(s),d_2(s))=(0,0,0)$ for the remaining coordinates $\nu'\leq s<\nu$, then the component containing $(d_0,d_1,d_2)$ in $\mG_2$ is isomorphic to $\mC^{\nu'}$.  By taking into account that $\mG_1$ has a component isomorphic to $\mC^\kappa$, $\mG_2$ has a component isomorphic to $\mC^{\lambda}$ for any cardinal $\lambda\leq\kappa$ and $\mG_2$ is a disjoint union of such components. Notice that by our construction, any component of $\mG_2$ has at most $2^\kappa$ isomorphic copies in $\mG_2$. So $\mG_2$ has at most $\kappa 2^\kappa=2^\kappa$ components. Since each component of $\mG_2$ has at most $2^\kappa$ elements, $\mG_2$ is a compatible digraph of cardinality $2^\kappa$ in $\var V$.

Let $\mL$ denote the digraph with $2^\kappa$-many vertices whose edge relation is the equality relation. Since $\mG_2$ is a compatible digraph of cardinality $2^\kappa$ in $\var V$, $\mL$ is also a compatible digraph in $\var V$. Finally, let $\mG_3=\mG_2\times \mL$. Since both $\mG_2$ and $ \mL$ are compatible in $\var V$, so is $\mG_3$. Clearly, $\mG_3$ is a disjoint union of the powers $\mC^\lambda$ when $\lambda$ runs through the cardinals at most $\kappa$ where each copy of $\mC^\lambda$ appears precisely $2^\kappa$-many times as a component of $\mG_3$.
\end{proof}

Now by the use of the preceding corollary, we can easily prove the main result of the paper.

\begin{theorem}
The filter of the interpretability types of all Taylor varieties is prime in $\LL$.
\end{theorem}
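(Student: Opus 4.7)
The plan is to deduce the theorem directly from Corollary \ref{kappa}, which gives a uniform, variety-independent characterization of non-Taylor varieties: for every non-Taylor variety $\var V$ there is a cardinal $\kappa_{\var V}$ such that for every $\kappa \geq \kappa_{\var V}$, the digraph $\mU_\kappa \coloneqq \dot\bigcup_{\lambda \leq \kappa} 2^\kappa \mC^\lambda$ is a compatible digraph in $\var V$. The crucial feature is that, up to isomorphism, $\mU_\kappa$ depends only on $\kappa$ and not on the variety in question.

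I would begin with two non-Taylor varieties $\var V_1$ and $\var V_2$ of disjoint signatures $\Sigma_1$ and $\Sigma_2$, which suffices because this is how representatives of the join in $\LL$ are chosen. Applying Corollary \ref{kappa} to each $\var V_i$, I pick a single cardinal $\kappa$ large enough that $\mU_\kappa$ is simultaneously a compatible digraph in both $\var V_1$ and $\var V_2$. This yields algebras $\alg A_i \in \var V_i$ whose common underlying set is the vertex set $U_\kappa$ of $\mU_\kappa$ and all of whose basic operations are polymorphisms of $\mU_\kappa$.

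The next step is to glue $\alg A_1$ and $\alg A_2$ into a single algebra $\alg A$ on $U_\kappa$ in the combined signature $\Sigma_1 \cup \Sigma_2$ by simply collecting all their basic operations. Because the signatures are disjoint, every identity in $\Sigma_i$ involves only operation symbols of $\var V_i$ and is therefore evaluated in $\alg A$ using only the operations inherited from $\alg A_i$; hence it holds in $\alg A$ precisely as it does in $\alg A_i$. This puts $\alg A$ into $\var V_1 \vee \var V_2$. Moreover, every basic operation of $\alg A$ is a polymorphism of $\mU_\kappa$, so $\mU_\kappa$ is a compatible digraph in $\var V_1 \vee \var V_2$, and the ``if'' direction of Corollary \ref{kappa} then gives that $\var V_1 \vee \var V_2$ is non-Taylor, as desired.

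I do not anticipate a substantive obstacle at this final step: all of the structural difficulty has already been absorbed into Lemma \ref{majdnem_jo} and Corollary \ref{kappa}. What makes the gluing go through is precisely the uniformity of the witness — the isomorphism type of $\mU_\kappa$ depends only on $\kappa$ — so both $\alg A_1$ and $\alg A_2$ can be taken on the same carrier $U_\kappa$, which is exactly what is needed in order to combine them into a single algebra living in the join.
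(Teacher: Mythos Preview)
Your proposal is correct and follows essentially the same route as the paper's own proof: pick a common $\kappa$ via Corollary~\ref{kappa}, observe that the single digraph $\mU_\kappa$ is then simultaneously compatible in both varieties, and conclude that it is compatible in the join, forcing $\var V_1\vee\var V_2$ to be non-Taylor. You spell out the gluing of $\alg A_1$ and $\alg A_2$ more explicitly than the paper (which just asserts ``therefore this digraph is also compatible in $\var V\vee\var W$''), and your final appeal to ``the `if' direction of Corollary~\ref{kappa}'' is more cleanly phrased as an appeal to Lemma~\ref{majdnem_jo}, but neither point constitutes a substantive difference.
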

\begin{proof}
It suffices to prove that the join of any two non-Taylor varieties is non-Taylor. Let $\var V$ and $\var W$ be arbitrary non-Taylor varieties. By the preceding corollary, there exists an infinite cardinal $\kappa$ such that the digraph $\dot\bigcup_{\lambda\leq \kappa}2^\kappa\mC^{\lambda}$ is a compatible digraph in both varieties $\var V$ and $\var W$. Therefore, this digraph is also compatible in $\var V\vee\var W$. Then by using the corollary again, $\var V\vee\var W$ is a non-Taylor variety.
\end{proof}

Finally, we would like to compare the interpretability types of the non-Taylor varieties that are defined via the digraphs occurring in the statement of Corollary~\ref{kappa}. For an infinite cardinal $\kappa$, let $\var V_\kappa$ denote the variety generated by the algebra $\alg A_\kappa$ whose universe is $A_\kappa=\dot\bigcup_{\lambda\leq \kappa}2^{\kappa} C^{\lambda}$ where $\lambda$ is a cardinal, and whose basic operations are the polymorphisms of $\mA_\kappa=\dot\bigcup_{\lambda\leq \kappa}2^{\kappa}\mC^{\lambda}$. The {\em beth numbers} are the cardinals defined by the transfinite recursion $\beth_0\coloneqq\aleph_0$ and for any non-zero ordinal $\alpha$, $\beth_{\alpha}\coloneqq\sup_{\beta<\alpha}2^{\beth_{\beta}}$ where $\beta$ is an ordinal.

\begin{proposition} The following hold. 
\begin{enumerate}
\item For any two infinite cardinals $\kappa_1$ and $\kappa_2$ with $2^{\kappa_1}<2^{\kappa_2}$, $\var V_{\kappa_1}$ interprets in $\var V_{\kappa_2}$, but $\var V_{\kappa_2}$ does not interpret in $\var V_{\kappa_1}$.
\item The interpretability types of the non-Taylor varieties $\var V_{\beth_{\alpha}}$ where $\alpha$  is an ordinal form a chain of unbounded size in $\LL$.
\end{enumerate}
\end{proposition}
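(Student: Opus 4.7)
Both claims reduce, via the standard equivalence that $\mD$ is a compatible digraph of $\var V$ iff $\var V$ interprets in the variety generated by $(D,\pol(\mD))$, to statements about compatible digraphs of the form $\mA_\kappa$.

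For the forward direction of part~(1), I show that $\mA_{\kappa_2}$ is a compatible digraph of $\var V_{\kappa_1}$. The power $\alg A_{\kappa_1}^{\kappa_2}$ lies in $\var V_{\kappa_1}$, and its underlying digraph $\mA_{\kappa_1}^{\kappa_2}$ has components of the form $\mC^{\sum_i \lambda_i}$ for sequences $(\lambda_i)_{i<\kappa_2}$ with each $\lambda_i\leq\kappa_1$; these sums exhaust every cardinal up to $\kappa_2$. Applying the pp-construction from the proof of Corollary~\ref{kappa} to this power, followed by a product with a $(2^{\kappa_2})$-element discrete reflexive digraph to normalise component multiplicities, produces $\mA_{\kappa_2}$ itself as a compatible digraph of $\var V_{\kappa_1}$, from which $\var V_{\kappa_1}$ interprets in $\var V_{\kappa_2}$.

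For the non-interpretation direction, suppose $\varphi\colon\pol(\mA_{\kappa_2})\to\pol(\mA_{\kappa_1})$ is a clone homomorphism. Restriction to nullary operations gives $\psi\colon A_{\kappa_2}\to A_{\kappa_1}$; the identity $\sigma\cdot c_a=c_{\sigma(a)}$ for every endomorphism $\sigma$, combined with Lemma~\ref{all_hom} and the constant endomorphisms linking arbitrary components (which together make the diagonal action of $\pol_1(\mA_{\kappa_2})$ on $A_{\kappa_2}^2$ spread any nontrivial identification), forces $\psi$ to be either injective or constant. Injectivity gives $2^{\kappa_2}=|A_{\kappa_2}|\leq |A_{\kappa_1}|=2^{\kappa_1}$, contradicting the hypothesis. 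In the constant case $\psi\equiv a'$, one computes $|\aut(\mA_{\kappa_2})|=2^{2^{\kappa_2}}$ (from permuting the $2^{\kappa_2}$ copies of each $\mC^\lambda$ and applying automorphisms within each copy), while the image of $\varphi$ on $\aut(\mA_{\kappa_2})$ is contained in the $a'$-stabilizer of $\aut(\mA_{\kappa_1})$, of cardinality at most $2^{2^{\kappa_1}}$. Hence there is a nontrivial $\gamma\in\aut(\mA_{\kappa_2})$ with $\varphi(\gamma)=\id$; the final contradiction then follows by exhibiting an identity in $\pol(\mA_{\kappa_2})$ involving $\gamma$ (for instance, a composition of the form $(\gamma^{-1}\cdot\pi_2)\cdot(\pi_1,\gamma\cdot\pi_2)=\pi_2$) whose transport through $\varphi$ is inconsistent with $\varphi$ also being required to preserve the compositional structure of $\pol(\mA_{\kappa_2})$.

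Part~(2) is immediate from part~(1): the beth recursion gives $2^{\beth_\alpha}=\beth_{\alpha+1}\leq\beth_\beta<\beth_{\beta+1}=2^{\beth_\beta}$ for ordinals $\alpha<\beta$, so $\var V_{\beth_\alpha}$ strictly interprets in $\var V_{\beth_\beta}$, and $\{[\var V_{\beth_\alpha}]:\alpha\text{ an ordinal}\}$ is a strictly increasing chain of unbounded size in $\LL$. The main obstacle is the final step of the non-interpretation direction: turning the existence of a nontrivial $\gamma\in\aut(\mA_{\kappa_2})$ with $\varphi(\gamma)=\id$ into a concretely violated identity of $\pol(\mA_{\kappa_2})$, a step that requires careful bookkeeping of how the collapse of $\gamma$ propagates through compositions of non-essentially-unary operations.
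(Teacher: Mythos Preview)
Your forward direction for part~(1) is correct in spirit, though unnecessarily elaborate: the paper observes directly that $\mA_{\kappa_1}^{\kappa_2}\cong\mA_{\kappa_2}$ (the component count and the multiplicities already come out right), so no pp-construction or normalising product is needed.

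The non-interpretation direction, however, has real gaps. First, your dichotomy that $\psi$ must be injective or constant is not justified: if $\psi(a)=\psi(b)$ with $a\neq b$ in the \emph{same} component $\mC^\lambda$, every endomorphism $\sigma$ of $\mA_{\kappa_2}$ sends $a,b$ into a common component, so the relation $\psi\circ\sigma=\varphi(\sigma)\circ\psi$ only propagates the identification to further same-component pairs, not across components. Second, in the constant case you yourself flag the final step as the ``main obstacle,'' and indeed the identity you propose, $(\gamma^{-1}\cdot\pi_2)\cdot(\pi_1,\gamma\cdot\pi_2)=\pi_2$, holds in every clone, so its $\varphi$-image is automatically valid and yields no contradiction. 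Knowing only that some nontrivial $\gamma\in\aut(\mA_{\kappa_2})$ has $\varphi(\gamma)=\id$ does not by itself produce an inconsistency, because clone homomorphisms need not be injective on unary operations.

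The paper sidesteps all of this with a single stroke. From the cardinality inequality one gets---exactly as you do---two elements $a,b$ in \emph{different} components of $\mA_{\kappa_2}$ with $\zeta(\hat a)=\zeta(\hat b)$ (since $\mA_{\kappa_2}$ has $2^{\kappa_2}$ components while $|A_{\kappa_1}|=2^{\kappa_1}$). Now the ternary operation
\[
f(z,x,y)=\begin{cases} x & \text{if $z$ lies in the component of $a$,}\\ y & \text{otherwise}\end{cases}
\]
is a polymorphism of $\mA_{\kappa_2}$ and satisfies $f(\hat a,x,y)=x$ and $f(\hat b,x,y)=y$ in the clone of $\var V_{\kappa_2}$. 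Applying $\zeta$ and using $\zeta(\hat a)=\zeta(\hat b)$ gives $x=y$ in the clone of $\var V_{\kappa_1}$, a contradiction. The key idea you are missing is to exploit a \emph{higher-arity} polymorphism that separates the components of $a$ and $b$; once you have a cross-component collision of constants, this immediately forces the target clone to be trivial, with no need for any dichotomy on $\psi$ or any automorphism-counting argument.

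Part~(2) is fine and matches the paper.
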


\begin{proof}
 Item (1) implies item (2) obviously. So we prove that (1) holds. Within the proof, we use  the abbreviations $\mA_i=\mA_{\kappa_i}$, $\alg A_i=\alg A_{\kappa_i}$, and $\var V_i=\var V_{\kappa_i}$ for $1\leq i\leq2$. Since 
$$\mA_1^{\kappa_2}=(\dot\bigcup_{\lambda\leq \kappa_{1}}2^{\kappa_{1}}\mC^{\lambda})^{\kappa_{2}}\cong \dot\bigcup_{\lambda\leq \kappa_{2}}2^{\kappa_{2}}\mC^{\lambda}=\mA_2,$$
$\mA_2$ is a compatible digraph in $\var V_1$. Therefore, $\var V_1$ interprets in $\var V_2$.  We want to prove that $\var V_2$ does not interpret in $\var V_1$. To the contrary, let us assume that $\zeta$ is a clone homomorphism from the clone of $\var V_2$ to the clone $\var V_1$. Clearly, the unary constant operations in the clone of $\var V_2$ are mapped to unary constant operations in the clone of $\var V_1$ by $\zeta$. Since $\mA_2$ has $2^{\kappa_2}$ components, $\mA_1$ has $2^{\kappa_1}$ elements, and $2^{\kappa_1}<2^{\kappa_2}$,  there exist two elements $a$ and $b$ in two different components of $\mA_2$ such that $\zeta(\hat a)=\zeta(\hat b)$ where $\hat a$ and $\hat b$ denote the unary constant term operations corresponding to $a$ and $b$ in the clone of $\var V_2$, respectively.  Clearly, the ternary operation $f(z,x,y)$ defined to be $x$ if $z$ is in the component containing $a$, and $y$ otherwise is a basic operation of $\alg A_2$. Then the identities $f(\hat a,x,y)=x,\ f(\hat b,x,y)=y$ hold in the clone of $\var V_2$. By applying $\zeta$ and using that $\zeta(\hat a)=\zeta(\hat b)$, the identities $$x=\zeta(f)(\zeta(\hat a),x,y)=\zeta(f)(\zeta(\hat b),x,y)=y$$ must be satisfied by the clone of $\var V_1$. So in particular, $\alg A_1\in \var V_1$ must have one element, a contradiction. 
\end{proof}

In particular, it is not possible to characterize non-Taylor varieties by having a fixed compatible digraph of the form
$\dot\bigcup_{t\in T}\mC^{\kappa_t}$ where $T$ is a non-empty set and the $\kappa_t$ are cardinals, not all zero. Indeed, if $\dot\bigcup_{t\in T}\mC^{\kappa_t}$ was compatible  in all non-Taylor varieties, then  by Corollary \ref{kappa}, there would be a $\kappa$ such that  $\mA_\kappa$ would also be compatible, but by the first part of the preceding proposition $\mA_\kappa$ is not compatible in the non-Taylor variety $\var V_{2^\kappa}$, a contradiction. Notice that this behavior of non-Taylor varieties sharply differs from that of the idempotent non-Taylor varieties. Indeed, the latter varieties are characterized by having $\mC$ as a compatible digraph.

\section*{Acknowledgement}
We would like to thank the anonymous referee for their detailed comments and suggestions that led to an improved version of our manuscript.

%Hence the kernel $\zeta|_{A_2}$ is acompatibel congruence of $\alg A_2$. We would like to describe the principal congruences of $\alg A_2$. So let $a,b\in A_2$ such that $a\neq b$. Suppose that $a$ and $b$ are in separate components of $\mA_2$, and let $c$ and $d$ be any two elements in $A_2$. Then there is a homomorphism that maps all components of $\mA_2$ but the component containing $b$ to $c$, and the component containing $b$ to $d$. This means that the principal congruence generated by $(a,b)$ is the full binary relation of $A_2$. Suppose now that $a$ and $b$ are in the same component $\mD$ of $\mA_2$. Clearly, $\mD$ is a copy of $\mC^\lambda$ for some $\lambda$.Since $a\neq b$, for some $i<\lambda$, $a$ and $b$ differ at the $i$-th coordinate. Then the map that is the $i$-th projection restricted to $\mD$ and sends the other components of $\mA_2$ to the $i$-th entry of $a$, is a homomorphism from $\mA_2$ to $\mC$. Let $(c,d)$ be any edge of $\mA_2$. Then $(c,d)$ is in a copy of $\mC$ in $\mA_2$. So there is an endmorphism of $\mA_2$ that sends $\{a,b\}$ to $\{c,d\}$. This implies that the congruence generated by $(a,b)$ is the connectivity congruence. So $\mA_2$ has three congruences: 0,1, and the connectivity congruence.As $\zeta|_{A_2}$ is a homomorphism from $\alg A_2$ to a reduct of $\alg A_1$, we must have $|{A_2} /{\kerr (\zeta|_{A_2})}|\leq|A_1|$. Since the cardinality of $ A_i$ is $\kappa_i$ for $1\leq i\leq 2$, and $2^{\kappa_1}<2^{\kappa_2}$, it follows that $\kerr (\zeta|_{A_2})=1$.

\end{document}